\numberwithin{equation}{section}
\def\Ext{\mbox{\rm Ext}\,} \def\Hom{\mbox{\rm Hom}}  
 \def\fin{\hfill$\square$}   \def\mod{\mbox{\rm \textbf{mod}}\,}
\def\Ker{\mbox{\rm Ker}\,} \def\Coker{\mbox{\rm Coker}\,}
\def\cone{\mbox{\rm cone}}\def\cocone{\mbox{\rm cocone}}
\def\B{\mathcal {B}}\def\C{\mathcal {C}}
\def\A{\mathcal{A}} 
\def\Id{\mbox{\rm Id}\,} \def\Im{\mbox{\rm Im}\,} \def\add{\mbox{\rm add}\,}
\theoremstyle{plain}
\newtheorem{theorem}{\bf Theorem}[section]
\newtheorem{lemma}[theorem]{\bf Lemma}
\newtheorem{corollary}[theorem]{\bf Corollary}
\newtheorem{proposition}[theorem]{\bf Proposition}
\theoremstyle{definition}
\newtheorem{definition}[theorem]{\bf Definition}
\newtheorem{remark}[theorem]{\bf Remark}
\newtheorem{example}[theorem]{\bf Example}
\newtheorem{condition}[theorem]{\bf Condition}
\newcommand{\bt}{\begin{theorem}}
\newcommand{\et}{\end{theorem}}
\newcommand{\bl}{\begin{lemma}}
\newcommand{\el}{\end{lemma}}
\newcommand{\bd}{\begin{definition}}
\newcommand{\ed}{\end{definition}}
\newcommand{\bc}{\begin{corollary}}
\newcommand{\ec}{\end{corollary}}
\newcommand{\bp}{\begin{proof}}
\newcommand{\ep}{\end{proof}}
\newcommand{\bx}{\begin{example}}
\newcommand{\ex}{\end{example}}
\newcommand{\br}{\begin{remark}}
\newcommand{\er}{\end{remark}}
\newcommand{\be}{\begin{equation}}
\newcommand{\ee}{\end{equation}}
\newcommand{\ba}{\begin{align}}
\newcommand{\ea}{\end{align}}
\newcommand{\bn}{\begin{enumerate}}
\newcommand{\en}{\end{enumerate}}
\newcommand{\bcs}{\begin{cases}}
\newcommand{\ecs}{\end{cases}}
\renewcommand{\section}{\@startsection{section}{1}{0mm}
  {-\baselineskip}{0.5\baselineskip}{\bf\leftline}}
\begin{document}

\title[Recollements of extriangulated categories]{Recollements of extriangulated categories}
\author[L. Wang, J. Wei, H. Zhang]{Li Wang, Jiaqun Wei,  Haicheng Zhang}
\address{Institute of Mathematics, School of Mathematical Sciences, Nanjing Normal University,
 Nanjing 210023, P. R. China.\endgraf}
\email{wl04221995@163.com (Wang); weijiaqun@njnu.edu.cn (Wei); zhanghc@njnu.edu.cn (Zhang).}

%\thanks{$\ast$: Corresponding author. The authors are supported by the Natural Science Foundation of China (Grant No.11471269; No.211070B31704).}
\subjclass[2010]{18E05, 18E30.}
\keywords{Extriangulated category; Recollement, Cotorsion pair.}

\begin{abstract} We give a simultaneous generalization of recollements of abelian categories and triangulated categories, which we call recollements of extriangulated categories. For a recollement  $(\mathcal{A}$, $\mathcal{B}$, $\mathcal{C})$ of extriangulated categories, we show that cotorsion pairs in $\mathcal{A}$ and $\mathcal{C}$ induce cotorsion pairs in $\mathcal{B}$ under certain conditions. As an application, our main result recovers a result given by Chen for recollements of triangulated categories, and it also shows a
new phenomena when it is applied to abelian categories.

\end{abstract}

\maketitle

%%%%%%%%%%%%%%%%%%%%%%%%%%%%%%%%%%%%%%%%%%%%%%%%%%%%%%%%%%%%%%%%%%%%%%%%%%%%%%%%%%%%%%%%%%%%%%%%%%%%%%%%%%%%%%%%%%%%%%%%%%%%%%%%%%%%%%%%%%%%%%%%%%%%%%%%
\section{Introduction}
Abelian categories and triangulated categories are two fundamental structures in algebra and geometry. Recollements of triangulated categories were introduced by Be{\u\i}linson, Bernstein and Deligne \cite{BBD} in
connection with derived categories of sheaves on topological spaces with the idea that one triangulated category may be ``glued together" from two others. The recollements of abelian categories first appeared in the construction of the category of perverse sheaves on a singular space in \cite{BBD}. Recollements of abelian categories and triangulated categories play an
important role in algebraic geometry and representation theory, see for instance \cite{Fr},  \cite{Ang}, \cite{Ps}. In recollements of abelian categories, the relations with tilting modules and torsion pairs have been studied in \cite{Ma} and \cite{Ma2}, respectively.  Chen \cite{Chen} studied the relationship of cotorsion pairs among three
triangulated categories in a recollement of triangulated categories.

%Recollements of triangulated categories were introduced by Be{\u\i}linson, Bernstein and Deligne \cite{BBD} in
%connection with derived categories of sheaves on topological spaces with the idea that one triangulated category may be ``glued together" from two others. The recollements of abelian categories first appeared in the construction of the category of perverse sheaves on a singular space in \cite{BBD}. Recollements of abelian categories and triangulated categories which plays an important role in algebraic geometry and representation theory, see for instance \cite{Fr},  \cite{Ang}, \cite{Ps}. Ma and zhao have investigated recollements in relation with tilting module \cite{Ma} and torsion pair \cite{Ma2}.  Chen showed that  how to glue together cotorsion pairs in the recollement of triangulated categories \cite{Chen}.

Recently, Nakaoka
and Palu \cite{Na} introduced an extriangulated category which is extracting properties on triangulated categories and exact categories. Recollements of abelian categories and triangulated categories are closely related, and they possess similar properties in many aspects. This inspires us to give a simultaneous generalization of recollements of abelian categories and triangulated categories, which we call recollements of extriangulated categories. Then we study the relationship of cotorsion pairs in a recollement of extriangulated categories. In order to achieve this goal, we need to consider the WIC Condition (cf. \cite[Condition 5.8]{Na}), introduce compatible morphisms, and then define left exact sequences, right exact sequences, left exact functors, and right exact functors in extriangulated categories.

%In this paper, we investigate several properties recollement of extriangulated categories $(\mathcal{A}$, $\mathcal{B}$, $\mathcal{C})$. We study the sufficient and necessary conditions about glued pair be a cotorsion pair in $\mathcal{B}$ which generalizes the works by Chen in triangulated categories. In addition, we consider the glued pairs in abelian categories.

The paper is organized as follows: we summarize some basic definitions and properties of extriangulated categories and exact functors in Section 2. In Section 3, we introduce the recollement of extriangulated categories and give some basic properties. Section 4 is devoted to giving conditions such that the glued pair with respect to cotorsion pairs in $\mathcal{A}$ and $\mathcal{C}$ is a cotorsion pair in $\mathcal{B}$ for a recollement $(\mathcal{A},\mathcal{B},\mathcal{C})$ of extriangulated categories. Moreover, we show that the conserve also holds for some special cotorsion pairs in $\mathcal{B}$. As an application, we recover the corresponding results in the triangulated category case.

\subsection{Conventions and notation.}
For an additive category $\mathscr{C}$, its subcategories are assumed to be {full} and closed under isomorphisms. A subcategory $\mathcal{D}$ of $\mathscr{C}$ is said to be {\em contravariantly finite} in $\mathscr{C}$ if for each object $M\in\mathscr{C}$, there exists a morphism $f:X\rightarrow M$ with $X\in \mathcal{D}$ such that $\mathscr{C}(\mathcal{D},f)$ is an epimorphism. Dually, one defines {\em covariantly
finite} subcategories in $\mathscr{C}$. Given an object $M\in\mathscr{C}$, we denote by $\add M$ the additive closure of $M$, that is, the full subcategory of $\mathscr{C}$ whose objects are the direct sums of direct summands of $M$.
Let $Q$ be a finite acyclic quiver, we denote by $S_i$ the one-dimensional simple (left) $kQ$-module associated to the vertex $i$ of $Q$, and denote by $P_i$ and $I_i$ the projective cover and injective envelop of $S_i$, respectively.

\section{Preliminaries}
\subsection{Extriangulated categories}
Let us recall some notions concerning extriangulated categories from \cite{Na}.

Let $\mathscr{C}$ be an additive category and let $\mathbb{E}$: $\mathscr{C}^{op}\times\mathscr{C}\rightarrow Ab$ be a biadditive functor. For any pair of objects $A$, $C\in\mathscr{C}$, an element $\delta\in \mathbb{E}(C,A)$ is called an {\em $\mathbb{E}$-extension}. The zero element $0\in\mathbb{E}(C,A)$ is called the {\em split $\mathbb{E}$-extension}.
 For any morphism $a\in \mathscr{C}(A,A')$ and $c\in {\mathscr{C}}(C',C)$, we have
$\mathbb{E}(C,a)(\delta)\in\mathbb{E}(C,A')$ and $\mathbb{E}(c,A)(\delta)\in\mathbb{E}(C',A).$
We simply denote them by $a_{\ast}\delta$ and $c^{\ast}\delta$,~respectively.  A morphism $(a,c)$: $\delta\rightarrow\delta'$ of $\mathbb{E}$-extensions is a pair of morphisms $a\in \mathscr{C}(A,A')$ and $c\in {\mathscr{C}}(C,C')$ satisfying the equality $a_{\ast}\delta=c^{\ast}\delta'$.

By Yoneda's lemma, any $\mathbb{E}$-extension $\delta\in \mathbb{E}(C,A)$ induces natural transformations
$$\delta_{\sharp}: \mathscr{C}(-,C)\rightarrow\mathbb{E}(-,A)~~\text{and}~~\delta^{\sharp}: \mathscr{C}(A,-)\rightarrow\mathbb{E}(C,-).$$ For any $X\in\mathscr{C}$, these
$(\delta_{\sharp})_X$ and $(\delta^{\sharp})_X$ are defined by
$(\delta_{\sharp})_X:  \mathscr{C}(X,C)\rightarrow\mathbb{E}(X,A), f\mapsto f^\ast\delta$ and $(\delta^{\sharp})_X:   \mathscr{C}(A,X)\rightarrow\mathbb{E}(C,X), g\mapsto g_\ast\delta$.

Two sequences of morphisms $A\stackrel{x}{\longrightarrow}B\stackrel{y}{\longrightarrow}C$ and $A\stackrel{x'}{\longrightarrow}B'\stackrel{y'}{\longrightarrow}C$ in $\mathscr{C}$ are said to be {\em equivalent} if there exists an isomorphism $b\in \mathscr{C}(B,B')$ such that the following diagram
$$\xymatrix{
  A \ar@{=}[d] \ar[r]^-{x} & B\ar[d]_{b}^-{\simeq} \ar[r]^-{y} & C\ar@{=}[d] \\
  A \ar[r]^-{x'} &B' \ar[r]^-{y'} &C  }$$ is commutative.
We denote the equivalence class of $A\stackrel{x}{\longrightarrow}B\stackrel{y}{\longrightarrow}C$ by $[A\stackrel{x}{\longrightarrow}B\stackrel{y}{\longrightarrow}C]$. In addition, for any $A,C\in\mathscr{C}$, we denote as
$$0=[A\stackrel{{1\choose0}}{\longrightarrow}A\oplus C\stackrel{(0~1)}{\longrightarrow}C].$$
For any two equivalence classes $[A\stackrel{x}{\longrightarrow}B\stackrel{y}{\longrightarrow}C]$ and $[A'\stackrel{x'}{\longrightarrow}B'\stackrel{y'}{\longrightarrow}C']$, we denote as
$$[A\stackrel{x}{\longrightarrow}B\stackrel{y}{\longrightarrow}C]\oplus[A'\stackrel{x'}{\longrightarrow}B'\stackrel{y'}{\longrightarrow}C']=
[A\oplus A'\stackrel{x\oplus x'}{\longrightarrow}B\oplus B'\stackrel{y\oplus y'}{\longrightarrow}C\oplus C'].$$

\begin{definition}
Let $\mathfrak{s}$ be a correspondence which associates an equivalence class $\mathfrak{s}(\delta)=[A\stackrel{x}{\longrightarrow}B\stackrel{y}{\longrightarrow}C]$ to any $\mathbb{E}$-extension $\delta\in\mathbb{E}(C,A)$ . This $\mathfrak{s}$ is called a {\em realization} of $\mathbb{E}$ if for any morphism $(a,c):\delta\rightarrow\delta'$ with $\mathfrak{s}(\delta)=[\Delta_{1}]$ and $\mathfrak{s}(\delta')=[\Delta_{2}]$, there is a commutative diagram as follows:
$$\xymatrix{
\Delta_{1}\ar[d] & A \ar[d]_-{a} \ar[r]^-{x} & B  \ar[r]^{y}\ar[d]_-{b} & C \ar[d]_-{c}    \\
 \Delta_{2}&A\ar[r]^-{x'} & B \ar[r]^-{y'} & C .   }
$$  A realization $\mathfrak{s}$ of $\mathbb{E}$ is said to be {\em additive} if it satisfies the following conditions:

(a) For any $A,~C\in\mathscr{C}$, the split $\mathbb{E}$-extension $0\in\mathbb{E}(C,A)$ satisfies $\mathfrak{s}(0)=0$.

(b) $\mathfrak{s}(\delta\oplus\delta')=\mathfrak{s}(\delta)\oplus\mathfrak{s}(\delta')$ for any pair of $\mathbb{E}$-extensions $\delta$ and $\delta'$.
\end{definition}

Let $\mathfrak{s}$ be an additive realization of $\mathbb{E}$. If $\mathfrak{s}(\delta)=[A\stackrel{x}{\longrightarrow}B\stackrel{y}{\longrightarrow}C]$, then the sequence $A\stackrel{x}{\longrightarrow}B\stackrel{y}{\longrightarrow}C$ is called a {\em conflation}, $x$ is called an {\em inflation} and $y$ is called a {\em deflation}.
In this case, we say $A\stackrel{x}{\longrightarrow}B\stackrel{y}{\longrightarrow}C\stackrel{\delta}\dashrightarrow$ is an $\mathbb{E}$-triangle.
We will write $A=\cocone(y)$ and $C=\cone(x)$ if necessary. We say an $\mathbb{E}$-triangle is {\em splitting} if it
realizes 0.

\begin{definition}
(\cite[Definition 2.12]{Na})\label{F}
We call the triplet $(\mathscr{C}, \mathbb{E},\mathfrak{s})$ an {\em extriangulated category} if it satisfies the following conditions:\\
$\rm(ET1)$ $\mathbb{E}$: $\mathscr{C}^{op}\times\mathscr{C}\rightarrow Ab$ is a biadditive functor.\\
$\rm(ET2)$ $\mathfrak{s}$ is an additive realization of $\mathbb{E}$.\\
$\rm(ET3)$ Let $\delta\in\mathbb{E}(C,A)$ and $\delta'\in\mathbb{E}(C',A')$ be any pair of $\mathbb{E}$-extensions, realized as
$\mathfrak{s}(\delta)=[A\stackrel{x}{\longrightarrow}B\stackrel{y}{\longrightarrow}C]$, $\mathfrak{s}(\delta')=[A'\stackrel{x'}{\longrightarrow}B'\stackrel{y'}{\longrightarrow}C']$. For any commutative square in $\mathscr{C}$
$$\xymatrix{
  A \ar[d]_{a} \ar[r]^{x} & B \ar[d]_{b} \ar[r]^{y} & C \\
  A'\ar[r]^{x'} &B'\ar[r]^{y'} & C'}$$
there exists a morphism $(a,c)$: $\delta\rightarrow\delta'$ which is realized by $(a,b,c)$.\\
$\rm(ET3)^{op}$~Dual of $\rm(ET3)$.\\
$\rm(ET4)$~Let $\delta\in\mathbb{E}(D,A)$ and $\delta'\in\mathbb{E}(F,B)$ be $\mathbb{E}$-extensions realized by
$A\stackrel{f}{\longrightarrow}B\stackrel{f'}{\longrightarrow}D$ and $B\stackrel{g}{\longrightarrow}C\stackrel{g'}{\longrightarrow}F$, respectively.
Then there exist an object $E\in\mathscr{C}$, a commutative diagram
\begin{equation}\label{2.1}
\xymatrix{
  A \ar@{=}[d]\ar[r]^-{f} &B\ar[d]_-{g} \ar[r]^-{f'} & D\ar[d]^-{d} \\
  A \ar[r]^-{h} & C\ar[d]_-{g'} \ar[r]^-{h'} & E\ar[d]^-{e} \\
   & F\ar@{=}[r] & F   }
\end{equation}
in $\mathscr{C}$, and an $\mathbb{E}$-extension $\delta''\in \mathbb{E}(E,A)$ realized by $A\stackrel{h}{\longrightarrow}C\stackrel{h'}{\longrightarrow}E$, which satisfy the following compatibilities:\\
$(\textrm{i})$ $D\stackrel{d}{\longrightarrow}E\stackrel{e}{\longrightarrow}F$ realizes $\mathbb{E}(F,f')(\delta')$,\\
$(\textrm{ii})$ $\mathbb{E}(d,A)(\delta'')=\delta$,\\
$(\textrm{iii})$ $\mathbb{E}(E,f)(\delta'')=\mathbb{E}(e,B)(\delta')$.\\
$\rm(ET4)^{op}$ Dual of $\rm(ET4)$.
\end{definition}

Let $\mathscr{C}$ be an extriangulated category, and $\mathcal{D},\mathcal{D}'\subseteq\mathscr{C}$. We write $\mathcal{D}\ast\mathcal{D}'$ for the full subcategory of objects $X$ admitting an $\mathbb{E}$-triangle $D\stackrel{}{\longrightarrow}X\stackrel{}{\longrightarrow}D'\stackrel{}\dashrightarrow$ with $D\in\mathcal{D}$ and $D'\in\mathcal{D}'$. A subcategory $\mathcal{D}$ of  $\mathscr{C}$ is {\em extension-closed}, if $\mathcal{D}\ast\mathcal{D}=\mathcal{D}$.
An object $P$ in $\mathscr{C}$ is called {\em projective} if for any conflation $A\stackrel{x}{\longrightarrow}B\stackrel{y}{\longrightarrow}C$ and any morphism $c$ in $\mathscr{C}(P,C)$, there exists $b$ in $\mathscr{C}(P,B)$ such that $yb=c$. We denote the full subcategory of projective objects in $\mathscr{C}$ by $\mathcal{P}(\mathscr{C})$. Dually, the {\em injective} objects are defined, and the full subcategory of injective objects in $\mathscr{C}$ is denoted by $\mathcal{I}(\mathscr{C})$. We say that $\mathscr{C}$ {\em has enough projectives} if for any object $M\in\mathscr{C}$, there exists an $\mathbb{E}$-triangle $A\stackrel{}{\longrightarrow}P\stackrel{}{\longrightarrow}M\stackrel{}\dashrightarrow$ satisfying $P\in\mathcal{P}(\mathscr{C})$. Dually, we define that $\mathscr{C}$ {\em has enough injectives}. In particular, if $\mathscr{C}$ is a triangulated category, then $\mathscr{C}$  has enough projectives and injectives with $\mathcal{P}(\mathscr{C})$ and $\mathcal{I}(\mathscr{C})$ consisting of zero objects.

\begin{example}
(a)  Exact categories, triangulated categories and extension-closed subcategories of triangulated categories are
extriangulated categories. (cf. \cite{Na})

(b) Let $\mathscr{C}$ be an extriangulated category. Then $\mathscr{C}/(\mathcal{P}(\mathscr{C})\cap \mathcal{I}(\mathscr{C}))$ is an extriangulated category which is neither exact nor triangulated in general (cf. \cite[Proposition 3.30]{Na}).
\end{example}

\begin{proposition}\label{exact} \cite[Proposition 3.3]{Na}
Let $\mathscr{C}$ be an extriangulated category. For any $\mathbb{E}$-triangle $A\stackrel{}{\longrightarrow}B\stackrel{}{\longrightarrow}C\stackrel{\delta}\dashrightarrow$, the following sequences of natural transformations are exact.
$$\mathscr{C}(C,-)\rightarrow \mathscr{C}(B,-)\rightarrow \mathscr{C}(A,-)\stackrel{\delta^{\sharp}}\rightarrow\mathbb{E}(C,-)\rightarrow\mathbb{E}(B,-),$$
$$\mathscr{C}(-,A)\rightarrow \mathscr{C}(-,B)\rightarrow \mathscr{C}(-,C)\stackrel{\delta_{\sharp}}\rightarrow\mathbb{E}(-,A)\rightarrow\mathbb{E}(-,B).$$
\end{proposition}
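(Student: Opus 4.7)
The plan is to fix an arbitrary test object $X \in \mathscr{C}$ and to verify exactness of the induced six-term sequence of abelian groups
\[
\mathscr{C}(C,X) \to \mathscr{C}(B,X) \to \mathscr{C}(A,X) \xrightarrow{\delta^{\sharp}} \mathbb{E}(C,X) \to \mathbb{E}(B,X)
\]
one position at a time. The second displayed sequence then follows by dualising into $(\mathscr{C}^{\mathrm{op}},\mathbb{E}^{\mathrm{op}},\mathfrak{s}^{\mathrm{op}})$, but it is convenient to prove them in parallel because the factorisation arguments at certain middle positions rely on the ``weak kernel''/``weak cokernel'' property supplied by the dual statement.

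The first step is to dispose of the three zero-composition conditions, which reduce to the identities $yx = 0$, $x_{\ast}\delta = 0$ and $y^{\ast}\delta = 0$. Each of these is obtained by comparing the given $\mathbb{E}$-triangle with a suitable split triangle via (ET3) and its dual, combined with the Yoneda description of $\delta_{\sharp}$ and $\delta^{\sharp}$. For exactness at $\mathscr{C}(B,X)$, given $f:B\to X$ with $fx=0$, a careful application of (ET3) to a square relating the given triangle to a split $\mathbb{E}$-triangle built from $f$ extracts a $g:C\to X$ with $gy=f$. For exactness at $\mathscr{C}(A,X)$, if $g:A\to X$ satisfies $g_{\ast}\delta=0$, then the pushout triangle realising $g_{\ast}\delta$ is split, and (ET3) applied to the morphism of extensions $(g,\mathrm{id}_{C}):\delta\to g_{\ast}\delta$ produces a middle component $h:B\to X$ with $hx=g$.

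The main obstacle is exactness at $\mathbb{E}(C,X)$. Given $\eta\in\mathbb{E}(C,X)$ with $y^{\ast}\eta=0$, the task is to produce $g:A\to X$ with $g_{\ast}\delta=\eta$. The approach is: realise $\eta$ as $X \xrightarrow{p} Y \xrightarrow{q} C \stackrel{\eta}{\dashrightarrow}$; use $y^{\ast}\eta=0$ to split the pullback of this triangle along $y$, producing $s:B\to Y$ with $qs=y$; observe that $q(sx)=yx=0$, so the weak-kernel property of $p$ (from the already-proved Hom-exactness of the dual sequence) yields $g:A\to X$ with $pg=sx$. The delicate final step is to apply (ET4) (or its dual) to the stacked $\mathbb{E}$-triangles realising $\delta$ and $\eta$ and extract from its compatibility conditions~(i)--(iii) the equality $g_{\ast}\delta=\eta$; in other words, one must verify that the triple $(g,s,\mathrm{id}_{C})$ witnesses a genuine morphism of $\mathbb{E}$-extensions $\delta\to\eta$, and not merely a commuting diagram of $\mathbb{E}$-triangles. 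Pinning down this compatibility is the heart of the argument.
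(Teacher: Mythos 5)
The paper gives no proof of this statement: it is quoted from \cite[Proposition 3.3]{Na}, so your attempt has to be measured against Nakaoka--Palu's argument. Your overall architecture --- fix a test object $X$, check exactness position by position, dualize for the second sequence --- matches theirs, and your outline for the zero-compositions and for exactness at $\mathscr{C}(B,X)$ and $\mathscr{C}(A,X)$ (comparing with split $\mathbb{E}$-triangles via (ET3), (ET3)$^{\mathrm{op}}$ and the realization axiom) is sound.

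The genuine gap sits exactly where you say ``the heart of the argument'' lies, and the tool you propose for closing it is the wrong one. Having realized $\eta$ by $X\xrightarrow{p}Y\xrightarrow{q}C$ and extracted $s:B\to Y$ with $qs=y$ from the splitting of $y^{\ast}\eta$, you build $g:A\to X$ with $pg=sx$ and then want $g_{\ast}\delta=\eta$, to be certified by the compatibilities (i)--(iii) of (ET4). This cannot work as stated: (ET4) is about two conflations composed along a common object ($A\to B\to D$ and $B\to C\to F$), which is not the configuration here; and, more fundamentally, a commuting diagram between $\mathfrak{s}(\delta)$ and $\mathfrak{s}(\eta)$ with components $(g,s,\mathrm{id}_C)$ does \emph{not} imply that $(g,\mathrm{id}_C)$ is a morphism of $\mathbb{E}$-extensions --- the realization axiom produces diagrams from morphisms of extensions, never conversely, so there is nothing to ``pin down'' along the route you chose. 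The correct final step is a single application of (ET3)$^{\mathrm{op}}$ to the commuting right-hand square $qs=\mathrm{id}_C\circ y$: it directly yields a morphism of extensions $(a,\mathrm{id}_C):\delta\to\eta$ realized by $(a,s,\mathrm{id}_C)$, whence $a_{\ast}\delta=\mathrm{id}_C^{\ast}\eta=\eta$; whether $a$ equals your $g$ is irrelevant, since only surjectivity onto $\ker(y^{\ast})$ is needed. This also shows that the weak-kernel detour producing $g$ is superfluous and that the proposition requires only (ET1)--(ET3)$^{\mathrm{op}}$, not (ET4).
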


Let $\mathscr{C}$ be an extriangulated category. A finite sequence $$X_{n}\stackrel{d_{n}}{\longrightarrow}X_{n-1}\stackrel{d_{n-1}}{\longrightarrow}\cdots \stackrel{d_{2}}{\longrightarrow} X_{1}\stackrel{d_{1}}{\longrightarrow}X_{0}$$ in $\mathscr{C}$ is said to be an {\em $\mathbb{E}$-triangle sequence}, if there exist $\mathbb{E}$-triangles $X_{n}\stackrel{d_{n}}{\longrightarrow}X_{n-1}\stackrel{f_{n-1}}{\longrightarrow}K_{n-1}\stackrel{}\dashrightarrow$,
$K_{i+1}\stackrel{g_{i+1}}{\longrightarrow}X_{i}\stackrel{f_{i}}{\longrightarrow}K_{i}\stackrel{}\dashrightarrow,~~1<i<n-1$,
and $K_{2}\stackrel{g_{2}}{\longrightarrow}X_{1}\stackrel{d_1}{\longrightarrow}X_0\stackrel{}\dashrightarrow$ such that $d_i=g_if_i$ for any $1<i<n$.
%According to \cite{Liu},
%an {\em $\mathbb{E}$-triangle sequence} in $\mathscr{C}$ is displayed as a sequence
%\begin{equation}\label{sequence}
%\cdots{\longrightarrow}X_{n+1}\stackrel{d_{n+1}}{\longrightarrow}X_{n}\stackrel{d_{n}}{\longrightarrow}X_{n-1}\stackrel{}{\longrightarrow}\cdots
%\end{equation}
%such that for any $n$, there exists an $\mathbb{E}$-triangle $K_{n+1}\stackrel{g_{n}}{\longrightarrow}X_{n}\stackrel{f_{n}}{\longrightarrow}K_{n}\stackrel{}\dashrightarrow$ and moreover $d_{n}=g_{n-1}f_{n}$.
%In particular, if $X_{n}\stackrel{d_{n}}{\longrightarrow}X_{n-1}\stackrel{d_{n-1}}{\longrightarrow}\cdots \stackrel{d_{2}}{\longrightarrow} X_{1}\stackrel{d_{1}}{\longrightarrow}X_{0}$ is an $\mathbb{E}$-triangle sequence, then $d_{n}=g_{n}$ is an inflation and $d_{1}=f_{1}$ is a deflation.

%Two morphisms $f: A\rightarrow B$ and $f':A'\rightarrow B'$ in $\mathscr{C}$ are said to be {\em isomorphic}, denoted by $f\simeq g$, if there are isomorphisms $x:A\rightarrow A'$ and $y:B\rightarrow B'$ in $\mathscr{C}$ such that $yf=f'x$.
Two morphism sequences $\eta_{1}:A\stackrel{f}{\longrightarrow}B\stackrel{g}{\longrightarrow}C$ and $\eta_{2}:A'\stackrel{f'}{\longrightarrow}B'\stackrel{g'}{\longrightarrow}C'$ in $\mathscr{C}$ are said to be {\em isomorphic}, denoted by $\eta_{1}\simeq \eta_{2}$, if there are isomorphisms $x:A\rightarrow A'$, $y:B\rightarrow B'$ and $z:C\rightarrow C'$ in $\mathscr{C}$ such that  $yf=f'x$ and $zg=g'y$.
\begin{lemma}\label{2.5} %$(1)$ Suppose that $f\simeq g$, then $f$ is an inflation (resp. deflation) if and only if $g$ is an inflation (resp. deflation).

$(1)$ Let $\eta_{1}$ and $\eta_{2}$ be morphism sequences in $\mathscr{C}$ such that $\eta_{1}\simeq \eta_{2}$. Then $\eta_{1}$ is a conflation if and only if $\eta_{2}$ is a conflation.

$(2)$ Let $A\stackrel{f}{\longrightarrow}B\stackrel{g}{\longrightarrow}C\stackrel{}\dashrightarrow$ be an $\mathbb{E}$-triangle in $\mathscr{C}$. Then $f$ is an isomorphism if and only if $C\cong0$. Similarly, $g$ is an isomorphism if and only if $A\cong0$.
\end{lemma}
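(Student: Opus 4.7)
For part (1), my plan is to transport the realization of $\eta_{1}$ across the given isomorphism by push-pull on $\mathbb{E}$, and then to recognize $\eta_{2}$ inside the resulting equivalence class by a five-lemma argument. Concretely, suppose $\eta_{1}:A\xrightarrow{f}B\xrightarrow{g}C$ is a conflation realizing some $\delta_{1}\in\mathbb{E}(C,A)$ and $(x,y,z):\eta_{1}\to\eta_{2}$ is an isomorphism onto $\eta_{2}:A'\xrightarrow{f'}B'\xrightarrow{g'}C'$. I would set $\delta_{2}:=x_{\ast}(z^{-1})^{\ast}\delta_{1}\in\mathbb{E}(C',A')$. A direct check, using bifunctoriality of $\mathbb{E}$, gives $x_{\ast}\delta_{1}=z^{\ast}\delta_{2}$, so that $(x,z):\delta_{1}\to\delta_{2}$ is a morphism of $\mathbb{E}$-extensions. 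Writing $\mathfrak{s}(\delta_{2})=[A'\xrightarrow{f_{0}}B_{0}\xrightarrow{g_{0}}C']$, the realization property supplies $b:B\to B_{0}$ making $(x,b,z)$ a morphism of $\mathbb{E}$-triangles. I would then invoke a five-lemma for extriangulated categories---which follows from Proposition \ref{exact} by feeding the rows obtained by applying $\mathscr{C}(-,B_{0})$ and $\mathscr{C}(-,B)$ into the classical five lemma in $\mathrm{Ab}$---to conclude that $b$ is an isomorphism. Composing $(x,b,z):\eta_{1}\to\mathfrak{s}(\delta_{2})$ with the inverse of $(x,y,z):\eta_{1}\to\eta_{2}$ then yields an isomorphism $(\mathrm{id},by^{-1},\mathrm{id}):\eta_{2}\to\mathfrak{s}(\delta_{2})$ that is the identity on both endpoints, placing $\eta_{2}$ in the equivalence class $\mathfrak{s}(\delta_{2})$ and so identifying it as a conflation.

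For part (2), it suffices to treat $f$; the statement for $g$ is dual. For the ``if'' direction, assuming $C\cong 0$, biadditivity of $\mathbb{E}$ forces $\mathbb{E}(C,A)=0$, so $\delta=0$, and by the additivity condition on $\mathfrak{s}$ the $\mathbb{E}$-triangle is equivalent to the split one $A\xrightarrow{\binom{1}{0}}A\oplus C\xrightarrow{(0\ 1)}C$; since $C\cong 0$, the inclusion $\binom{1}{0}$ is an isomorphism, and transporting through the middle equivalence (which is the identity on $A$) exhibits $f$ as an isomorphism. For the ``only if'' direction, if $f$ is an isomorphism then $\mathscr{C}(-,f)$ and $\mathbb{E}(-,f)$ are natural isomorphisms; feeding this into the second exact sequence of Proposition \ref{exact} gives $g_{\ast}=0$ by exactness at $\mathscr{C}(-,B)$, then $\delta_{\sharp}=0$ by exactness at $\mathbb{E}(-,A)$, and finally $\mathscr{C}(-,C)=0$ by exactness at $\mathscr{C}(-,C)$, from which Yoneda's lemma forces $C\cong 0$.

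The only real obstacle is the passage from ``$b$ exists'' to ``$b$ is invertible'' in part (1): the realization axiom asserts no uniqueness, so the five-lemma route through Proposition \ref{exact} is unavoidable. Everything else reduces to routine bookkeeping with the push-pull identities $a_{\ast}$ and $c^{\ast}$ and with the explicit form of the split $\mathbb{E}$-triangle.
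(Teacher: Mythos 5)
Your overall strategy is the right one: the paper itself disposes of this lemma by citing \cite[Corollary 3.6]{Na} and \cite[Proposition 3.7]{Na}, and what you have written is essentially a reconstruction of the proofs of those two facts. Part (2) is correct as stated: the ``if'' direction via additivity of $\mathfrak{s}$ and the split $\mathbb{E}$-triangle, and the ``only if'' direction by chasing the second exact sequence of Proposition \ref{exact} to get $\mathscr{C}(-,C)=0$ and then invoking Yoneda, are both sound. The transport of $\delta_1$ to $\delta_2=x_{\ast}(z^{-1})^{\ast}\delta_1$ in part (1), the verification that $(x,z)$ is a morphism of extensions, and the final bookkeeping identifying $\eta_2$ with a representative of $\mathfrak{s}(\delta_2)$ are also fine.

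The genuine gap is exactly at the step you flag as the crux: the classical five lemma does \emph{not} directly yield that $b$ is an isomorphism, because in the five-term exact sequences of Proposition \ref{exact} the map induced by $b$ sits in the \emph{second} position, e.g.
$$\mathscr{C}(X,A)\to\mathscr{C}(X,B)\to\mathscr{C}(X,C)\to\mathbb{E}(X,A)\to\mathbb{E}(X,B),$$
and the sequence does not extend one more step to the left. The four lemma therefore only gives that $b_{\ast}:\mathscr{C}(X,B)\to\mathscr{C}(X,B_0)$ and $b^{\ast}:\mathscr{C}(B_0,X)\to\mathscr{C}(B,X)$ are \emph{surjective} for every $X$; injectivity is not available from this diagram. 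Moreover, the two functors you name, $\mathscr{C}(-,B_0)$ and $\mathscr{C}(-,B)$, are both contravariant in the triangle variable, so they only produce the surjectivity of $b^{\ast}$, i.e.\ a left inverse $ub=\mathrm{id}_B$ (take $X=B$); a split monomorphism need not be invertible. The repair is standard but must be said: also apply the \emph{covariant} sequences $\mathscr{C}(B_0,-)$ to both triangles, so the four lemma gives $b_{\ast}:\mathscr{C}(B_0,B)\to\mathscr{C}(B_0,B_0)$ surjective and hence a right inverse $bv=\mathrm{id}_{B_0}$; a morphism with a left and a right inverse is an isomorphism. (Alternatively, realize the inverse morphism $(x^{-1},z^{-1}):\delta_2\to\delta_1$ by some $b'$, note that $b'b$ is an endomorphism of $A\stackrel{f}{\to}B\stackrel{g}{\to}C$ over the identities, write $b'b-\mathrm{id}_B=fh$ using exactness, and check that $\mathrm{id}_B-fh$ is a two-sided inverse of $b'b$ because $fhfh=(fhf)h=0$; this is the argument behind \cite[Corollary 3.6]{Na}.) With either patch inserted, your proof is complete.
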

\begin{proof} It is easily proved by \cite[Corollary 3.6]{Na} and \cite[Proposition 3.7]{Na}.
\end{proof}

%\begin{definition} An  $\mathbb{E}$-triangle sequence as (\ref{sequence}) in $\mathscr{C}$ is called an {\em exact} if for any $Y\in\mathscr{C}$, there are exact sequences $\xymatrix{\mathbb{E}(X_{n+1},Y)\ar[r]^-{g_{n}}&\mathbb{E}(X_{n},Y)\ar[r]^-{f_{n}}&\mathbb{E}(X_{n-1},Y).&}$
%\end{definition}
%\begin{remark} Assume that $\mathscr{C}$ is a triangulated category with the suspension functor $[1]$, then every sequence $X\stackrel{}{\longrightarrow}Y\stackrel{}{\longrightarrow}Z$ is an exact $\mathbb{E}$-triangle sequence if $X\stackrel{}{\longrightarrow}Y\stackrel{}{\longrightarrow}Z\rightarrow X[1]$ is a triangle. Let $\mathscr{C}$ be an exact category\end{remark}
\subsection{Exact functors}
In what follows, we will always assume that $\mathscr{C}$ is an extriangulated category. In addition, we assume the following conditions for the rest of this paper (see \cite[Condition 5.8]{Na}).

\begin{condition}\label{WIC} (WIC)
(1) Let $f:X\rightarrow Y$ and  $g:Y\rightarrow Z$ be any composable pair of morphisms in $\mathscr{C}$. If $gf$ is an inflation, then $f$ is an inflation.

(2) Let $f:X\rightarrow Y$ and  $g:Y\rightarrow Z$ be any composable pair of morphisms in $\mathscr{C}$. If $gf$ is a deflation, then $g$ is a deflation.
\end{condition}
\begin{remark} If $\mathscr{C}$ is a triangulated category or weakly idempotent complete exact category (\cite[Proposition 7.6]{Bu}), then Condition \ref{WIC} is satisfied.
\end{remark}

\begin{definition}\label{right} A morphism $f$ in $\mathscr{C}$ is called {\em compatible}, if ``$f$ is both an inflation and a deflation" implies $f$ is an isomorphism. That is, the class of compatible morphisms is the following class consisting of the morphisms in $\mathscr{C}$
$$\{f~|~f~\text{is~not~an~inflation},~\text{or}~f~\text{is~not~a~deflation},~\text{or}~f~\text{is~an~isomorphism}\}.$$
\end{definition}

%$(1)$ For any $(h_{1},h_{2})$ and $(g_{1},g_{2})$ belongs to $\widetilde{\Phi}_{f}$, there exists an isomorphism $s:K\rightarrow L$ such making the diagram commute.
%\begin{equation*}
%\xymatrix@!=0.5pc{
%   A\ar[rrrr]^{f}\ar[drr]^{h_{1}}\ar[ddrr]_{g_{1}} & &   & &    B \\
%   & &  K\ar[urr]^{h_{2}} \ar@{-->}[d]^{s}_{\cong}&  &     &  \\
%    &  & L\ar[uurr]_{g_{2}}&    & &   }
%\end{equation*}
It is clear that all morphisms are compatible in an exact category. While, the compatible morphisms in a triangulated category $\mathscr{C}$ are just the isomorphisms in $\mathscr{C}$.

\begin{definition}\label{right} A sequence $A\stackrel{f}{\longrightarrow}B\stackrel{g}{\longrightarrow}C$ in $\mathscr{C}$ is said to be {\em right exact} if
there exists an $\mathbb{E}$-triangle $K\stackrel{h_{2}}{\longrightarrow}B\stackrel{g}{\longrightarrow}C\stackrel{}\dashrightarrow$ and a deflation $h_{1}:A\rightarrow K$ which is compatible, such that $f=h_2h_1$. Dually one can also define the {\em left exact} sequences.

A $4$-term $\mathbb{E}$-triangle sequence $A{\stackrel{f}\longrightarrow}B\stackrel{g}{\longrightarrow}C\stackrel{h}{\longrightarrow}D$ is called {\em right exact} (resp. {\em left exact}) if there exist $\mathbb{E}$-triangles $A\stackrel{f}{\longrightarrow}B\stackrel{g_1}{\longrightarrow}K\stackrel{}\dashrightarrow$
and $K\stackrel{g_{2}}{\longrightarrow}C\stackrel{h}{\longrightarrow}D\stackrel{}\dashrightarrow$ such that $g=g_2g_1$ and $g_1$ (resp. $g_2$) is compatible.
\end{definition}

For the convenience of statements, given a morphism $f:A\rightarrow B$ in $\mathscr{C}$, we denote by $\Phi_{f}$ the set consisting of all pairs $(h_{1},h_{2})$ such that $h_{1}:A\rightarrow K$ is a deflation, $h_{2}:K\rightarrow B$ is an inflation and $f=h_{2}h_{1}$.
\begin{lemma}\label{zero} Let $\eta:A\stackrel{f}{\longrightarrow}B\stackrel{g}{\longrightarrow}C$ be a right exact sequence in $\mathscr{C}$.

$(1)$  If $f$ is an inflation, then $\eta$ is a conflation.

$(2)$ If $A=0$, then $g$ is an isomorphism.

\end{lemma}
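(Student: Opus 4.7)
The plan is to prove the two parts separately, leveraging Condition \ref{WIC} and the compatibility hypothesis built into the definition of a right exact sequence; the whole argument amounts to matching each intermediate map against the three properties of being an inflation, a deflation, and a compatible morphism.

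For part (1), I would start by unwrapping the hypothesis: by definition one obtains an $\mathbb{E}$-triangle $K\xrightarrow{h_2}B\xrightarrow{g}C\dashrightarrow$ together with a compatible deflation $h_1:A\to K$ satisfying $f=h_2h_1$. Since $f=h_2h_1$ is assumed to be an inflation, Condition \ref{WIC}(1) applied to the composable pair $(h_1,h_2)$ forces $h_1$ itself to be an inflation. Combined with $h_1$ already being a deflation, its compatibility then yields that $h_1$ is an isomorphism. With $h_1$ invertible, the sequence $\eta$ is isomorphic, via the triple $(h_1,\id_{B},\id_{C})$, to the conflation $K\xrightarrow{h_2}B\xrightarrow{g}C$, and Lemma \ref{2.5}(1) transfers the conflation property back to $\eta$.

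For part (2), I would set $A=0$, so that $h_1:0\to K$ becomes a compatible deflation. To invoke the compatibility criterion I also need $h_1$ to be an inflation; this is automatic because the split $\mathbb{E}$-extension $0\in\mathbb{E}(K,0)$ is realized (by condition (a) of additive realizations) as $0\to K\xrightarrow{\id_{K}}K$, exhibiting $0\to K$ as an inflation for every $K\in\mathscr{C}$. Compatibility of $h_1$ then yields that $h_1$ is an isomorphism, hence $K\cong 0$. Applying Lemma \ref{2.5}(2) to the $\mathbb{E}$-triangle $K\xrightarrow{h_2}B\xrightarrow{g}C\dashrightarrow$ with $K\cong 0$ shows that $g$ is an isomorphism.

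The only mildly subtle step, and the one I would flag as the main obstacle, is recognizing that $0\to K$ qualifies as an inflation; once that is cleared by invoking the split conflation, everything else follows formally from Condition \ref{WIC}, the definition of compatible morphism, and Lemma \ref{2.5}. It is worth noting that compatibility is used in an essential way --- it is precisely the extra assumption that prevents $h_1$ from being an arbitrary inflation-deflation pair and forces the desired collapse of the factorization $f=h_2h_1$.
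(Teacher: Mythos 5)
Your proof is correct and follows essentially the same route as the paper: part (1) is identical (WIC(1) forces $h_1$ to be an inflation, compatibility makes it an isomorphism, then Lemma \ref{2.5}(1)), and part (2) rests on the same key observation that a morphism out of $0$ is an inflation. The only cosmetic difference is that in (2) you argue at the level of $h_1:0\to K$ to get $K\cong 0$ before applying Lemma \ref{2.5}(2), whereas the paper applies part (1) directly to $f:0\to B$ to exhibit $0\to B\xrightarrow{g}C$ as a conflation; both are equally valid.
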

\begin{proof}
$(1)$ Since $\eta$ is right exact, there is an $\mathbb{E}$-triangle $K\stackrel{h_{2}}{\longrightarrow}B\stackrel{g}{\longrightarrow}C\stackrel{}\dashrightarrow$ and a compatible morphism $h_{1}$ such that $(h_{1},h_{2})\in\Phi_{f}$. Since $f$ is an inflation, by Condition \ref{WIC}(1), we obtain that $h_1$ is an inflation. Since $h_1$ is also a deflation, we get that $h_1$ is an isomorphism. Then by Lemma \ref{2.5}(1), we have that $\eta$ is a conflation.

$(2)$ Noting that $0\rightarrow B$ is an inflation, we obtain that $0\rightarrow B\stackrel{g}\rightarrow C$ is an $\mathbb{E}$-triangle. Then by Lemma \ref{2.5}(2), we get that $g$ is an isomorphism.
\end{proof}

We omit the dual statement of Lemma \ref{zero}.

\begin{remark}\label{2.8} (1) A sequence $\eta: A\stackrel{f}{\longrightarrow}B\stackrel{g}{\longrightarrow}C$ is both left exact and right exact if and only  if $\eta$ is a conflation.

(2) If $\mathscr{C}$ is an abelian category, then $A\stackrel{f}{\longrightarrow}B\stackrel{g}{\longrightarrow}C$ is right exact if and only if $A\stackrel{f}{\longrightarrow}B\stackrel{g}{\longrightarrow}C{\longrightarrow}0$ is exact. Similarly, $A\stackrel{f}{\longrightarrow}B\stackrel{g}{\longrightarrow}C$ is left exact if and only if $0\stackrel{} {\longrightarrow} A\stackrel{f}{\longrightarrow}B\stackrel{g}{\longrightarrow}C$ is exact. If $\mathscr{C}$ is a triangulated category with the suspension  functor [1]. Then $\eta: A\stackrel{f}{\longrightarrow}B\stackrel{g}{\longrightarrow}C$  is right exact if and only if $A\stackrel{f}{\longrightarrow}B\stackrel{g}{\longrightarrow}C{\longrightarrow}A[1]$ is a triangle if and only if $\eta$ is left exact.

%(2) Every conflation is right exact. But the inverse is not true in general.  Indeed, if $\mathscr{C}$ is an abelian category, then the exact sequence $A\stackrel{f}{\longrightarrow}B\stackrel{g}{\longrightarrow}C{\longrightarrow}0$ is right exact but not short exact.

\end{remark}

Let us give the notion of right (left) exact functors in extriangulated categories.

\begin{definition}\label{right exact} Let $(\mathcal{A},\mathbb{E}_{\mathcal{A}},\mathfrak{s}_{\mathcal{A}})$ and $(\mathcal{B},\mathbb{E}_{\mathcal{B}},\mathfrak{s}_{\mathcal{B}})$ be extriangulated categories. An additive covariant functor $F:\mathcal{A}\rightarrow \mathcal{B}$ is called a {\em right exact functor} if it satisfies the following conditions
\begin{itemize}
   \item [(1)] If $f$ is a compatible morphism in $\A$, then $Ff$ is compatible in $\B$.
   \item [(2)] If $A\stackrel{a}{\longrightarrow}B\stackrel{b}{\longrightarrow}C$ is right exact in $\mathcal{A}$, then $FA\stackrel{Fa}{\longrightarrow}FB\stackrel{Fb}{\longrightarrow}FC$ is right exact in $\mathcal{B}$. (Then for any $\mathbb{E}_{\mathcal{A}}$-triangle $A\stackrel{f}{\longrightarrow}B\stackrel{g}{\longrightarrow}C\stackrel{\delta}\dashrightarrow$, there exists an $\mathbb{E}_{\mathcal{B}}$-triangle $A'\stackrel{x}{\longrightarrow}FB\stackrel{Fg}{\longrightarrow}FC\stackrel{}\dashrightarrow$ such that $Ff=xy$ and $y: FA\rightarrow A'$ is a deflation and compatible. Moreover, $A'$ is uniquely determined up to isomorphism.)

     %\item [(2)] $FX\stackrel{Ff}{\longrightarrow}FY\stackrel{Fg}{\longrightarrow}FZ$ is right exact.
  %\item [(1)] For any $\mathbb{E}_{\mathcal{A}}$-triangle $X\stackrel{}{\longrightarrow}Y\stackrel{g}{\longrightarrow}Z\stackrel{\delta}\dashrightarrow$, there exists an $\mathbb{E}_{\mathcal{B}}$-triangle $X'\stackrel{}{\longrightarrow}FY\stackrel{Fg}{\longrightarrow}FZ\stackrel{}\dashrightarrow$ in $\mathcal{B}$ for some object $X'\in \mathcal{B}$.
  \item [(3)] There exists a natural transformation $$\eta=\{\eta_{(C,A)}:\mathbb{E}_{\mathcal{A}}(C,A)\longrightarrow\mathbb{E}_{\mathcal{B}}(F^{op}C,A')\}_{(C,A)\in{\A}^{\rm op}\times\A}$$ such that $\mathfrak{s}_{\mathcal{B}}(\eta_{(C,A)}(\delta))=[A'\stackrel{x}{\longrightarrow}FB\stackrel{Fg}{\longrightarrow}FC]$.

 \end{itemize}
%Noting that condition (3) in Definition \ref{right exact} implies $FX\stackrel{Ff}{\longrightarrow}FY\stackrel{Fg}{\longrightarrow}FZ$ is right exact.
Dually, we define the {\em left exact functor} between two extriangulated categories.
\end{definition}

The {\em extriangulated functor} between two extriangulated categories has been defined in \cite{Ben}.  For our requirements, we modify the definition as the following
\begin{definition}\label{exact functor}
Let $(\mathcal{A},\mathbb{E}_{\mathcal{A}},\mathfrak{s}_{\mathcal{A}})$ and $(\mathcal{B},\mathbb{E}_{\mathcal{B}},\mathfrak{s}_{\mathcal{B}})$ be extriangulated categories.  We say an additive covariant functor $F:\mathcal{A}\rightarrow \mathcal{B}$ is an {\em exact functor} if the following conditions hold.
\begin{itemize}
  \item [(1)] If $f$ is a compatible morphism in $\A$, then $Ff$ is compatible in $\B$.
  \item [(2)] There exists a natural transformation $$\eta=\{\eta_{(C,A)}\}_{(C,A)\in{\A}^{\rm op}\times\A}:\mathbb{E}_{\mathcal{A}}(-,-)\Rightarrow\mathbb{E}_{\mathcal{B}}(F^{\rm op}-,F-).$$
  \item [(3)] If $\mathfrak{s}_{\mathcal{A}}(\delta)=[A\stackrel{x}{\longrightarrow}B\stackrel{y}{\longrightarrow}C]$, then $\mathfrak{s}_{\mathcal{B}}(\eta_{(C,A)}(\delta))=[F(A)\stackrel{F(x)}{\longrightarrow}F(B)\stackrel{F(y)}{\longrightarrow}F(C)]$.

  \end{itemize}

%In the following we also  call an extriangulated functor to be an {\em exact functor}.
% An almost exact functor $F$ is called an {\em exact functor} if, in addition, it also satisfies the following condition:
%\begin{itemize}
%\item [(3)]  If $A\stackrel{x}{\longrightarrow}B\stackrel{y}{\longrightarrow}C$ is left (resp. right) exact in $\A$, then $FA\stackrel{Fx}{\longrightarrow}FB\stackrel{Fy}{\longrightarrow}FC$ is left (resp. right) exact in $\B$.
%  \end{itemize}
\end{definition}
%In the following we also simply call an extriangulated functor to be an {\em exact functor}.

 % Explicitly, for any morphisms $a:A\rightarrow A'$ and  $c:C'\rightarrow C$ in $\mathcal{A}$, there exists a commutative diagram
      %\begin{equation}\label{A}\xymatrix@C=.8in{
        %\mathbb{E}_{\mathcal{A}}(C,A) \ar[d]^{\eta_{(C,A)}} \ar[r]^{c^{\ast}a_{\ast}} & \mathbb{E}_\mathcal{A}(C',A')  \ar[d]_{\eta_{(C',A')}} \\
        %\mathbb{E}_\mathcal{B}(F^{\rm op}C,FA)  \ar[r]^-{(F^{\rm op}c)^{\ast}(Fa)_{\ast}} & \mathbb{E}_\mathcal{B}(F^{\rm op}C',FA'). }\end{equation}

\begin{proposition}\label{2.10} Let $(\mathcal{A},\mathbb{E}_{\mathcal{A}},\mathfrak{s}_{\mathcal{A}})$ and $(\mathcal{B},\mathbb{E}_{\mathcal{B}},\mathfrak{s}_{\mathcal{B}})$ be extriangulated categories. An additive covariant functor $F: \mathcal{A}\rightarrow\mathcal{B}$ is exact if and only if $F$ is both left exact and right exact.
\end{proposition}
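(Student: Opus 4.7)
The plan is to verify each implication directly from the definitions, using Remark~\ref{2.8}(1) as the bridge between ``left exact plus right exact'' and ``conflation''.

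For $(\Rightarrow)$, the key preliminary observation is that an exact functor preserves inflations and deflations: if $x:A\rightarrow B$ is an inflation, it sits in a conflation $A\stackrel{x}{\longrightarrow}B\longrightarrow C$, and condition (3) of Definition~\ref{exact functor} forces $FA\stackrel{Fx}{\longrightarrow}FB\longrightarrow FC$ to be a conflation in $\mathcal{B}$, so $Fx$ is an inflation; the deflation case is dual. Given then a right exact sequence $A\stackrel{a}{\longrightarrow}B\stackrel{b}{\longrightarrow}C$ with factorization $a=h_2h_1$ through an $\mathbb{E}$-triangle $K\stackrel{h_2}{\longrightarrow}B\stackrel{b}{\longrightarrow}C\dashrightarrow$ and a compatible deflation $h_1:A\rightarrow K$, condition (1) of Definition~\ref{exact functor} yields $Fh_1$ compatible, the preceding observation yields $Fh_1$ a deflation and $FK\stackrel{Fh_2}{\longrightarrow}FB\stackrel{Fb}{\longrightarrow}FC$ a conflation, and $Fa=Fh_2\cdot Fh_1$. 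Hence $FA\rightarrow FB\rightarrow FC$ is right exact, and the natural transformation supplied by exactness restricts to the one demanded by right exactness (with $A'=FA$). Left exactness is the dual argument.

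For $(\Leftarrow)$, fix an $\mathbb{E}_{\mathcal{A}}$-triangle $A\stackrel{x}{\longrightarrow}B\stackrel{y}{\longrightarrow}C\stackrel{\delta}{\dashrightarrow}$. Its underlying sequence is a conflation, hence both left exact and right exact by Remark~\ref{2.8}(1). Applying $F$ (both left and right exact) yields that $FA\stackrel{Fx}{\longrightarrow}FB\stackrel{Fy}{\longrightarrow}FC$ is both left exact and right exact in $\mathcal{B}$, so by Remark~\ref{2.8}(1) again it is a conflation; in particular $Fx$ is an inflation. On the other hand, right exactness supplies an $\mathbb{E}_{\mathcal{B}}$-triangle $A'\stackrel{x'}{\longrightarrow}FB\stackrel{Fy}{\longrightarrow}FC\stackrel{\eta^R(\delta)}{\dashrightarrow}$ with a compatible deflation $h_1:FA\rightarrow A'$ satisfying $Fx=x'h_1$. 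Since $Fx$ is an inflation, Condition~\ref{WIC}(1) makes $h_1$ an inflation as well; being simultaneously a compatible inflation and deflation, $h_1$ must be an isomorphism. Define $\eta_{(C,A)}(\delta):=(h_1^{-1})_{\ast}\eta^R_{(C,A)}(\delta)\in\mathbb{E}_{\mathcal{B}}(FC,FA)$. A direct diagram chase using the isomorphism $h_1$ and the realization clause of the right exact functor definition then gives $\mathfrak{s}_{\mathcal{B}}(\eta_{(C,A)}(\delta))=[FA\stackrel{Fx}{\longrightarrow}FB\stackrel{Fy}{\longrightarrow}FC]$, verifying condition (3) of Definition~\ref{exact functor}. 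Condition (1) is inherited verbatim from right exactness of $F$.

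The main obstacle is promoting the family $\{\eta_{(C,A)}\}$ to a genuine natural transformation. The isomorphism $h_1$ is manufactured from a chosen realization of $\delta$ and is not strictly intrinsic to $\delta$, so for any $a:A\rightarrow A_1$ and $c:C_1\rightarrow C$ one must check that the isomorphisms associated to $\delta$, $a_{\ast}\delta$, and $c^{\ast}\delta$ are compatible with the bifunctorial structure of $\mathbb{E}_{\mathcal{B}}(F-,F-)$. This follows from the uniqueness-up-to-isomorphism clause of the right exact functor definition together with $\rm(ET3)$, $\rm(ET3)^{\rm op}$, and naturality of $\eta^R$, but it is where the bulk of the bookkeeping lies.
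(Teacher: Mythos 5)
Your proof is correct and follows the same route as the paper: the heart of the sufficiency direction in both is applying Remark~\ref{2.8}(1) twice, converting the conflation $A\to B\to C$ into a sequence that is both left and right exact, applying $F$, and converting back. You go further than the paper's one-line argument by also verifying the necessity direction and by actually constructing the natural transformation required in Definition~\ref{exact functor}(2)--(3) from the isomorphism $h_1$ (including the correct appeal to Condition~\ref{WIC}(1) and compatibility to see that $h_1$ is invertible); the paper leaves both of these points implicit.
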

\begin{proof}
We only need to prove the sufficiency.  Let  $A\stackrel{f}{\longrightarrow}B\stackrel{g}{\longrightarrow}C\stackrel{\delta}\dashrightarrow$ be an $\mathbb{E}_{\A}$-triangle. Since $F$ is both left exact and right exact, we obtain that $FA\stackrel{Ff}{\longrightarrow}FB\stackrel{Fg}{\longrightarrow}FC$ is both left exact and right exact. By Remark \ref{2.8}(1),
$FA\stackrel{Ff}{\longrightarrow}FB\stackrel{Fg}{\longrightarrow}FC$ is a conflation.
\end{proof}
\begin{remark}If the categories $\mathcal{A}$ and $\mathcal{B}$ are abelian, Definition \ref{right exact} coincides with the usual right exact functor in abelian categories, and Definition \ref{exact functor} coincides with the usual exact functor. If the categories $\mathcal{A}$ and $\mathcal{B}$ are triangulated, by Remark \ref{2.8} and Proposition \ref{2.10}, we know that $F$ is a left exact functor if and only if $F$ is a triangle functor if and only if  $F$ is a right exact functor.
\end{remark}

%\begin{definition} An $\mathbb{E}$-triangle sequence $ \eta: A\stackrel{x}{\longrightarrow}B\stackrel{y}{\longrightarrow}C\stackrel{z}{\longrightarrow}D$ is said to be {\em  left exact} if $A\stackrel{x}{\longrightarrow}B\stackrel{y}{\longrightarrow}C$ is left exact. Similarly, it is said to be  {\em  right exact} if $B\stackrel{y}{\longrightarrow}C\stackrel{z}{\longrightarrow}D$ is right exact.
%\end{definition}
%\begin{remark} Let $A\stackrel{x}{\longrightarrow}B\stackrel{y}{\longrightarrow}C$ be a conflation in $\mathscr{C}$. Noting that $A\stackrel{x}{\longrightarrow}B\stackrel{y}{\longrightarrow}C{\longrightarrow}0$ is a left exact $\mathbb{E}$-triangle sequence and $0{\longrightarrow}A\stackrel{x}{\longrightarrow}B\stackrel{y}{\longrightarrow}C$ is a right exact $\mathbb{E}$-triangle sequence.
%\end{remark}

\begin{lemma}\label{Iso} Let $(\mathcal{A},\mathbb{E}_{\mathcal{A}},\mathfrak{s}_{\mathcal{A}})$ and $(\mathcal{B},\mathbb{E}_{\mathcal{B}},\mathfrak{s}_{\mathcal{B}})$ be extriangulated categories and $F:\mathcal{A}\rightarrow \mathcal{B}$ be a functor which admits a right adjoint functor $G$.

$(1)$ If $\mathcal{A}$ has enough projectives and $F$ is an exact functor which preserves projectives, then $\mathbb{E}_{\mathcal{B}}(FX,Y)\cong\mathbb{E}_{\mathcal{A}}(X,GY)$ for any $X\in\mathcal{A}$ and $Y\in\mathcal{B}$.

$(2)$ If $\mathcal{B}$ has enough injectives and $G$ is an exact functor which preserves injectives, then $\mathbb{E}_{\mathcal{B}}(FX,Y)\cong\mathbb{E}_{\mathcal{A}}(X,GY)$ for any $X\in\mathcal{A}$ and $Y\in\mathcal{B}$.
\end{lemma}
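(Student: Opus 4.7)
\emph{Plan.} The strategy is classical dimension shifting: I will replace $X$ (respectively $Y$) by a short resolution through a projective (respectively injective) object, convert the $\mathbb{E}$-groups into cokernels of $\Hom$-groups via Proposition~\ref{exact}, and then transport across the adjunction $F\dashv G$.

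For part (1), I begin by choosing an $\mathbb{E}_{\A}$-triangle $K\to P\to X\dashrightarrow$ with $P\in\P(\A)$, available because $\A$ has enough projectives. Applying the exact functor $F$ produces an $\mathbb{E}_{\B}$-triangle $FK\to FP\to FX\dashrightarrow$, and by hypothesis $FP\in\P(\B)$. One preliminary observation is needed: an object $Q$ is projective if and only if $\mathbb{E}(Q,-)=0$. This is a quick consequence of Proposition~\ref{exact}: given any $\delta\in\mathbb{E}(Q,N)$ realized by $N\to E\to Q\dashrightarrow$, lifting $\id_Q$ along the deflation $E\to Q$ yields a section, so $\delta$ splits; the converse is the tail of the long exact sequence.

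Now I apply Proposition~\ref{exact} to the two triangles. On the $\A$-side I obtain the exact sequence
$$\A(P,GY)\longrightarrow\A(K,GY)\longrightarrow\mathbb{E}_{\A}(X,GY)\longrightarrow\mathbb{E}_{\A}(P,GY)=0,$$
and on the $\B$-side
$$\B(FP,Y)\longrightarrow\B(FK,Y)\longrightarrow\mathbb{E}_{\B}(FX,Y)\longrightarrow\mathbb{E}_{\B}(FP,Y)=0.$$
Both $\mathbb{E}$-groups are thereby identified with the cokernels of the leftmost maps. The adjunction $F\dashv G$ supplies a natural isomorphism $\B(F-,Y)\cong\A(-,GY)$ which, applied to the morphism $K\to P$, identifies the two cokernel diagrams and yields the claimed isomorphism. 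Part (2) is strictly dual: I pick an $\mathbb{E}_{\B}$-triangle $Y\to I\to L\dashrightarrow$ with $I\in\mathcal{I}(\B)$, apply the injective-preserving exact functor $G$ to get an $\mathbb{E}_{\A}$-triangle $GY\to GI\to GL\dashrightarrow$ with $GI\in\mathcal{I}(\A)$, and then run the same argument using the covariant half of Proposition~\ref{exact} together with the dual characterization $\mathbb{E}(-,I)=0$ for injective $I$.

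\emph{Main obstacle.} I do not anticipate a serious difficulty. The only subtlety is the preliminary characterization $P\in\P(\A)\iff\mathbb{E}_{\A}(P,-)=0$ (and its dual), which is not stated in the excerpt but is what allows both long exact sequences to collapse to plain cokernels. Once that is in hand, the argument is a formal diagram chase combining Proposition~\ref{exact} with the naturality of $F\dashv G$.
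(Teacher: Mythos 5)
Your argument is correct and is essentially the paper's own proof: both resolve $X$ through a projective, apply $F$ to get a second $\mathbb{E}$-triangle with projective middle term, and compare the two resulting exact sequences from Proposition \ref{exact} via the adjunction isomorphism (the paper phrases the last step as the Five Lemma on a commutative ladder, you phrase it as an identification of cokernels, which is the same thing). Your explicit preliminary remark that $P$ projective forces $\mathbb{E}(P,-)=0$ is a fact the paper uses tacitly when it writes the rows as ending in $0$, so nothing is missing on either side.
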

\begin{proof} (1) For any $X\in \mathcal{A}$, there exists an $\mathbb{E}_{\A}$-triangle \begin{equation}\label{1}\xymatrix{M\ar[r]^-{}&P\ar[r]^-{}&X\ar@{-->}[r]^{}&}\end{equation} with $P\in \mathcal{P({\mathcal{A}})}$. Since $F$ is an exact functor, and $F$ preserves projectives, we obtain the following $\mathbb{E}_{\B}$-triangle
\begin{equation}\label{2}\xymatrix{FM\ar[r]^-{}&FP\ar[r]^-{}&FX\ar@{-->}[r]&}\end{equation}
with $FP\in \mathcal{P}(\mathcal{B})$. Applying the functors $\Hom_{\mathcal{A}}(-,GY)$ and $\Hom_{\mathcal{B}}(-,Y)$ to (\ref{1}) and (\ref{2}), respectively, we get the following commutative diagram
$$\xymatrix{
  \Hom_{\mathcal{A}}(P,GY)\ar[d] \ar[r] & \Hom_{\mathcal{A}}(M,GY)  \ar[d]\ar[r] &  \mathbb{E}_{\mathcal{A}}(X,GY)\ar[d]  \ar[r] & 0\\
 \Hom_{\mathcal{B}}(FP,Y) \ar[r] & \Hom_{\mathcal{B}}(FM,Y)\ar[r] &  \mathbb{E}_{\mathcal{B}}(FX,Y) \ar[r]&  0
 .}$$
By Five-Lemma, we obtain that $\mathbb{E}_{\mathcal{B}}(FX,Y)\cong\mathbb{E}_{\mathcal{A}}(X,GY)$.

(2) It is similar to (1).
\end{proof}

The following lemma is well-known. For the convenience of the reader we give a short proof.
\begin{lemma}\label{adjoint} Let $\mathcal{A}$ and $\mathcal{B}$ be two categories and $F:\mathcal{A}\rightarrow \mathcal{B}$ be a functor which admits a right adjoint functor $G$. Let $\eta:\Id_{\mathcal{A}}\Rightarrow GF$ be the unit and $\epsilon:FG\Rightarrow\Id_{\mathcal{B}}$ be the counit.

$(1)$ $\Id_{FX}=\epsilon_{FX}F{(\eta_{X})}$ for any $X\in\mathcal{A}$.

$(2)$ $\Id_{GY}=G(\epsilon_{Y}){\eta_{GY}}$ for any $Y\in\mathcal{B}$.
\end{lemma}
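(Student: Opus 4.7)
The plan is to read off both identities from the defining adjunction isomorphism by writing the unit and counit as the images of identity morphisms. Denote the natural bijection by
$$\varphi_{X,Y}:\Hom_{\mathcal{B}}(FX,Y)\xrightarrow{\ \sim\ }\Hom_{\mathcal{A}}(X,GY).$$
By the standard convention, the unit $\eta_X\in\Hom_{\mathcal{A}}(X,GFX)$ is defined as $\eta_X=\varphi_{X,FX}(\Id_{FX})$, and the counit $\epsilon_Y\in\Hom_{\mathcal{B}}(FGY,Y)$ as $\epsilon_Y=\varphi^{-1}_{GY,Y}(\Id_{GY})$.

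First I would deduce closed formulas for $\varphi$ and $\varphi^{-1}$ in terms of $\eta$ and $\epsilon$. For any $g:FX\to Y$, the square expressing naturality of $\varphi$ in the second variable (applied to $g:FX\to Y$, starting from $\Id_{FX}\in\Hom_{\mathcal{B}}(FX,FX)$) yields
$$\varphi_{X,Y}(g)=\varphi_{X,Y}(g\circ\Id_{FX})=G(g)\circ\varphi_{X,FX}(\Id_{FX})=G(g)\circ\eta_X.$$
The dual naturality square, this time in the first variable, gives
$$\varphi^{-1}_{X,Y}(f)=\epsilon_Y\circ F(f)\qquad\text{for any }f:X\to GY.$$

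With these two formulas in hand, both triangle identities drop out by inserting identities. For (1), apply $\varphi^{-1}_{X,FX}$ to $\eta_X=\varphi_{X,FX}(\Id_{FX})$:
$$\Id_{FX}=\varphi^{-1}_{X,FX}(\eta_X)=\epsilon_{FX}\circ F(\eta_X).$$
For (2), apply $\varphi_{GY,Y}$ to $\epsilon_Y=\varphi^{-1}_{GY,Y}(\Id_{GY})$:
$$\Id_{GY}=\varphi_{GY,Y}(\epsilon_Y)=G(\epsilon_Y)\circ\eta_{GY}.$$

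There is no genuine obstacle here; the lemma is purely formal and only requires being careful about naturality of $\varphi$ in each slot and the fact that $\varphi,\varphi^{-1}$ are mutually inverse. The only place one has to be mildly careful is in choosing which naturality square (in $X$ versus in $Y$) to apply to extract each of the two formulas $\varphi(g)=G(g)\eta_X$ and $\varphi^{-1}(f)=\epsilon_Y F(f)$; after that the identities follow immediately.
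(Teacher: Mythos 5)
Your proof is correct and is essentially the same argument as the paper's: both rest on the naturality of the adjunction bijection applied to a square involving $\eta_X$ (resp.\ $\epsilon_Y$), together with the definitions of unit and counit as the images of identity morphisms. The only cosmetic difference is that you first extract the closed formulas $\varphi(g)=G(g)\eta_X$ and $\varphi^{-1}(f)=\epsilon_Y F(f)$ and then specialize, whereas the paper computes directly inside one naturality square and concludes by injectivity of the bijection.
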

\begin{proof} (1) Let $\eta:\Hom_{\mathcal{B}}(F-,-)\Rightarrow\Hom_{\mathcal{A}}(-,G-)$ be the adjoint isomorphism. For any $X\in\mathcal{A}$, consider the following commutative diagram
$$\xymatrix{
  \Hom_{\B}(FGFX,FX)\ar[d]_{(F\eta_{X})^*} \ar[r]^{\eta_{GFX,FX}} & \Hom_{\A}(GFX,GFX) \ar[d]^{\eta_X^\ast} \\
  \Hom_{\B}(FX,FX) \ar[r]^{\eta_{X,FX}} & \Hom_{\A}(X,GFX)   .}
$$
It follows that $\eta_{X,FX}(\epsilon_{FX}F{(\eta_{X})})=(\eta_{GFX,FX}(\epsilon_{FX}))\eta_{X}=\eta_{X}=\eta_{X,FX}(\Id_{FX})$.
Hence, we have that $\Id_{FX}=\epsilon_{FX}F({\eta_{X}})$. The proof of (2) is similar. \end{proof}

\section{Recollements}
Let us introduce the concepts of recollements of extriangulated categories.

\begin{definition}\label{recollement}
Let $\mathcal{A}$, $\mathcal{B}$ and $\mathcal{C}$ be three extriangulated categories. A \emph{recollement} of $\mathcal{B}$ relative to
$\mathcal{A}$ and $\mathcal{C}$, denoted by ($\mathcal{A}$, $\mathcal{B}$, $\mathcal{C}$), is a diagram
\begin{equation}\label{recolle}
  \xymatrix{\mathcal{A}\ar[rr]|{i_{*}}&&\ar@/_1pc/[ll]|{i^{*}}\ar@/^1pc/[ll]|{i^{!}}\mathcal{B}
\ar[rr]|{j^{\ast}}&&\ar@/_1pc/[ll]|{j_{!}}\ar@/^1pc/[ll]|{j_{\ast}}\mathcal{C}}
\end{equation}
given by two exact functors $i_{*},j^{\ast}$, two right exact functors $i^{\ast}$, $j_!$ and two left exact functors $i^{!}$, $j_\ast$, which satisfies the following conditions:
\begin{itemize}
  \item [(R1)] $(i^{*}, i_{\ast}, i^{!})$ and $(j_!, j^\ast, j_\ast)$ are adjoint triples.
  \item [(R2)] $\Im i_{\ast}=\Ker j^{\ast}$.
  \item [(R3)] $i_\ast$, $j_!$ and $j_\ast$ are fully faithful.
  \item [(R4)] For each $X\in\mathcal{B}$, there exists a left exact $\mathbb{E}_{\B}$-triangle sequence
  \begin{equation}\label{first}
  \xymatrix{i_\ast i^! X\ar[r]^-{\theta_X}&X\ar[r]^-{\vartheta_X}&j_\ast j^\ast X\ar[r]&i_\ast A}
   \end{equation}
  with $A\in \mathcal{A}$, where $\theta_X$ and  $\vartheta_X$ are given by the adjunction morphisms.
  \item [(R5)] For each $X\in\mathcal{B}$, there exists a right exact $\mathbb{E}_{\B}$-triangle sequence
  \begin{equation}\label{second}
  \xymatrix{i_\ast\ar[r] A' &j_! j^\ast X\ar[r]^-{\upsilon_X}&X\ar[r]^-{\nu_X}&i_\ast i^\ast X&}
   \end{equation}
 with $A'\in \mathcal{A}$, where $\upsilon_X$ and $\nu_X$ are given by the adjunction morphisms.
\end{itemize}
\end{definition}

\begin{remark} (1) If the categories $\mathcal{A}$, $\mathcal{B}$ and $\mathcal{C}$ are abelian, then Definition \ref{recollement} coincides with the definition of recollement of abelian categories (cf. \cite{Fr}, \cite{Ps}, \cite{Ma2}).

(2) If the categories $\mathcal{A}$, $\mathcal{B}$ and $\mathcal{C}$ are triangulated, then Definition \ref{recollement} coincides with the definition of recollement of triangulated categories (cf. \cite{BBD}).
\end{remark}

Now, we collect some properties of recollement of extriangulated categories, which will be used in the sequel.

\begin{lemma}\label{CY} Let ($\mathcal{A}$, $\mathcal{B}$, $\mathcal{C}$) be a recollement of extriangulated categories as (\ref{recolle}).

$(1)$ All the natural transformations
$$i^{\ast}i_{\ast}\Rightarrow\Id_{\A},~\Id_{\A}\Rightarrow i^{!}i_{\ast},~\Id_{\C}\Rightarrow j^{\ast}j_{!},~j^{\ast}j_{\ast}\Rightarrow\Id_{\C}$$
are natural isomorphisms.

$(2)$ $i^{\ast}j_!=0$ and $i^{!}j_\ast=0$.

$(3)$ $i^{\ast}$ preserves projective objects and $i^{!}$ preserves injective objects.

$(3')$ $j_{!}$ preserves projective objects and $j_{\ast}$ preserves injective objects.

$(4)$ If $i^{!}$ (resp. $j_{\ast}$) is  exact, then $i_{\ast}$ (resp. $j^{\ast}$) preserves projective objects.

$(4')$ If $i^{\ast}$ (resp. $j_{!}$) is  exact, then $i_{\ast}$ (resp. $j^{\ast}$) preserves injective objects.

$(5)$ If $\mathcal{B}$ has enough projectives, then $\mathcal{A}$ has enough projectives and $\mathcal{P}(\mathcal{A})=\add(i^{\ast}(\mathcal{P}(\mathcal{B})))$; if $\mathcal{B}$ has enough injectives, then $\mathcal{A}$ has enough injectives and $\mathcal{I}(\mathcal{A})=\add(i^{!}(\mathcal{I}(\mathcal{B})))$.

$(6)$  If $\mathcal{B}$ has enough projectives and $j_{\ast}$ is exact, then $\mathcal{C}$ has enough projectives and $\mathcal{P}(\mathcal{C})=\add(j^{\ast}(\mathcal{P}(\mathcal{B})))$; if $\mathcal{B}$ has enough injectives and $j_{!}$ is exact, then $\mathcal{C}$ has enough injectives and $\mathcal{I}(\mathcal{C})=\add(j^{\ast}(\mathcal{I}(\mathcal{B})))$.

%$(7)$ If $X\stackrel{f}{\longrightarrow}Y\stackrel{g}{\longrightarrow}Z\stackrel{}\dashrightarrow$ is an $\mathbb{E}_\mathcal{C}$-triangle, $j^{\ast}(\cone(j_{\ast}f))\cong Z$ and $j^{\ast}(\cocone(j_{!}g))\cong X$.

$(7)$ If $\mathcal{B}$ has enough projectives and $i^{!}$ is  exact, then $\mathbb{E}_{\mathcal{B}}(i_{\ast}X,Y)\cong\mathbb{E}_{\mathcal{A}}(X,i^{!}Y)$ for any $X\in\mathcal{A}$ and $Y\in\mathcal{B}$.

$(7')$  If $\mathcal{C}$ has enough projectives and $j_{!}$ is  exact, then $\mathbb{E}_{\mathcal{B}}(j_{!}Z,Y)\cong\mathbb{E}_{\mathcal{C}}(Z,j^{\ast}Y)$ for any $Y\in\mathcal{B}$ and $Z\in\mathcal{C}$.

$(8)$  If $i^{\ast}$ is exact, then $j_{!}$ is  exact.

$(8')$ If $i^{!}$ is exact, then $j_{\ast}$ is exact.
\end{lemma}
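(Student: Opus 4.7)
My plan is to handle the twelve items of the lemma in their natural order, grouping them by shared technique.

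Parts $(1)$ and $(2)$ are pure adjunction formalities. For $(1)$ I will invoke the classical principle that a functor is fully faithful iff the appropriate unit/counit of each of its adjunctions is an isomorphism, combined with (R3) applied to $i_{*}$, $j_{!}$, $j_{*}$. For $(2)$ I will chase Yoneda using (R2): for any $Z\in\mathcal{C}$ and $M\in\mathcal{A}$,
$$\Hom_{\mathcal{A}}(i^{*}j_{!}Z, M) \cong \Hom_{\mathcal{B}}(j_{!}Z, i_{*}M) \cong \Hom_{\mathcal{C}}(Z, j^{*}i_{*}M) = 0,$$
forcing $i^{*}j_{!} = 0$; the dual gives $i^{!}j_{*} = 0$.

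Parts $(3)$--$(4')$ will all follow from the general principle that a left (resp.\ right) adjoint to an exact functor preserves projectives (resp.\ injectives). Thus $(3)$ and $(3')$ follow immediately from the exactness of $i_{*}$ and $j^{*}$; for $(4)$ and $(4')$ I apply the same principle to the pairs $(i_{*}, i^{!})$, $(j^{*}, j_{*})$, $(i^{*}, i_{*})$, and $(j_{!}, j^{*})$ under their respective exactness hypotheses. For $(5)$, given $X \in \mathcal{A}$, I would lift to an $\mathbb{E}_{\mathcal{B}}$-triangle $N \to P \to i_{*}X \dashrightarrow$ with $P \in \mathcal{P}(\mathcal{B})$, apply the right exact functor $i^{*}$, and use $i^{*}i_{*} \cong \Id_{\mathcal{A}}$ from $(1)$ to extract, via the definition of right exact sequence, an $\mathbb{E}_{\mathcal{A}}$-triangle $K \to i^{*}P \to X \dashrightarrow$ whose middle term $i^{*}P$ is projective by $(3)$; any projective $Q$ of $\mathcal{A}$ then splits off such a triangle (its defining $\mathbb{E}$-extension vanishes), yielding the $\add$-characterization. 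The injective half of $(5)$ and all of $(6)$ run identically, with $(6)$ using the exactness of $j_{*}$ together with $(4)$ to guarantee that $j^{*}P$ is projective in $\mathcal{C}$. Parts $(7)$--$(7')$ are then direct applications of Lemma \ref{Iso}, whose three hypotheses are exactly supplied by $(5)$, the assumed exactness, and $(4)$ or $(3')$.

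The substantive difficulty is $(8)$ (dually $(8')$). Since $j_{!}$ is already right exact, I only need to show it sends each $\mathbb{E}_{\mathcal{C}}$-triangle $A \xrightarrow{f} B \xrightarrow{g} C \dashrightarrow$ to a conflation. Definition \ref{right exact} supplies an $\mathbb{E}_{\mathcal{B}}$-triangle $A'' \xrightarrow{x} j_{!}B \xrightarrow{j_{!}g} j_{!}C \dashrightarrow$ and a compatible deflation $y : j_{!}A \to A''$ with $j_{!}f = xy$, so the task reduces to proving $y$ is an isomorphism. My plan is to set $L = \cocone(y)$, then apply the exact $j^{*}$: using $j^{*}j_{!} \cong \Id_{\mathcal{C}}$ together with the uniqueness of the first term in a conflation with fixed deflation $B \to C$, the morphism $j^{*}y$ becomes an iso, so Lemma \ref{2.5}(2) forces $j^{*}L = 0$. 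Applying the (by hypothesis) exact $i^{*}$ to $A'' \to j_{!}B \to j_{!}C \dashrightarrow$ and then to $L \to j_{!}A \to A'' \dashrightarrow$, and using $i^{*}j_{!} = 0$ from $(2)$, similarly forces first $i^{*}A'' = 0$ and then $i^{*}L = 0$. Now (R5) applied to $L$ collapses to the right exact $4$-term sequence $i_{*}A' \to 0 \to L \to 0$, which by definition decomposes as $\mathbb{E}$-triangles $i_{*}A' \to 0 \to K \dashrightarrow$ and $K \to L \to 0 \dashrightarrow$ with $0 \to K$ compatible. Lemma \ref{2.5}(2) yields $K \cong L$ from the second triangle, while $0 \to K$ is simultaneously a deflation (from the first triangle) and an inflation (via the split triangle $0 \to K \xrightarrow{\Id} K \dashrightarrow$), so compatibility upgrades it to an isomorphism, giving $K = 0 = L$ and hence $y$ iso. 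The hard part will be precisely this final collapse: extracting $L = 0$ from $i^{*}L = j^{*}L = 0$ is where both (R5) and the Condition \ref{WIC}-driven notion of compatible morphism become indispensable, since without compatibility nothing prevents a right exact sequence $i_{*}A' \to 0 \to L \to 0$ from having nonzero $L$. Part $(8')$ will be dual, exchanging (R5), $\cocone$, $i^{*}$, and deflations with (R4), $\cone$, $i^{!}$, and inflations.
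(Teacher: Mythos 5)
Your proposal is correct and, for parts $(1)$--$(7')$, follows essentially the same route as the paper: fully-faithfulness for $(1)$, the Yoneda/adjunction chase with $j^{\ast}i_{\ast}=0$ for $(2)$, lifting along deflations through the adjunctions for $(3)$--$(4')$, applying $i^{\ast}$ (resp.\ $j^{\ast}$) to a projective deflation for $(5)$--$(6)$, and Lemma \ref{Iso} for $(7)$--$(7')$. The one genuinely different step is the endgame of $(8)$. Both you and the paper reduce to showing that the comparison morphism $y\colon j_{!}A\to A''$ is an isomorphism by proving $j^{\ast}L=0$ and $i^{\ast}L=0$ for $L=\cocone(y)$; but where the paper then invokes (R2) directly ($j^{\ast}L=0$ gives $L\cong i_{\ast}N$, whence $L\cong i_{\ast}i^{\ast}i_{\ast}N\cong i_{\ast}i^{\ast}L=0$), you instead feed $L$ into (R5) and collapse the resulting right exact sequence $i_{\ast}A'\to 0\to L\to 0$, using compatibility of the factor $0\to K$ to force $K=L=0$. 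Your variant works and is a nice illustration of how the compatibility condition in (R4)/(R5) carries real content, but it is more roundabout than the paper's one-line appeal to $\Im i_{\ast}=\Ker j^{\ast}$. One small imprecision to repair: you justify that $j^{\ast}y$ is an isomorphism by ``uniqueness of the first term in a conflation with fixed deflation,'' which only yields an abstract isomorphism $j^{\ast}A''\cong A$, not invertibility of the specific morphism $j^{\ast}y$. The correct argument (and the paper's) is that $j^{\ast}y$ is a compatible deflation because $y$ is and $j^{\ast}$ is exact, while $f=(j^{\ast}x)(j^{\ast}y)$ is an inflation, so Condition \ref{WIC}(1) makes $j^{\ast}y$ an inflation as well, hence an isomorphism by compatibility; all the ingredients are already in your setup, so this is a local fix rather than a structural gap.
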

\begin{proof} $(1)$ The proof follows from the fact that $i_\ast$, $j_!$ and $j_\ast$ are fully faithful.

$(2)$ For any $X\in \mathcal{C}$, since $j^*i_*=0$, we have that
\begin{align*}
\Hom_{\mathcal{A}}(i^{\ast}j_!X,i^{\ast}j_!X)&\cong\Hom_{\mathcal{B}}(j_!X,i_{\ast}i^{\ast}j_!X)\\
 &\cong\Hom_{\mathcal{C}}(X,j^{\ast}i_{\ast}i^{\ast}j_!X)\\
 &=0,
\end{align*}which follows that $i^{\ast}j_!=0$. Similarly,  $i^{!}j_\ast=0$.

(3) Let $P\in \mathcal{P}(\mathcal{B})$, we need to show that $i^{\ast}P\in \mathcal{P}(\mathcal{A})$. Let
\begin{equation}\label{3}\xymatrix{X\ar[r]^-{f}&Y\ar[r]^-{g}&Z\ar@{-->}[r]&}\end{equation} be an arbitrary $\mathbb{E}_{\mathcal{A}}$-triangle
and $h\in\Hom_{\mathcal{A}}(i^*P,Z)$. Applying $i_{\ast}$ to (\ref{3}), we have an $\mathbb{E}_{\mathcal{B}}$-triangle
\begin{equation}\label{4}\xymatrix{i_{\ast}X\ar[r]^-{i_{\ast}f}&i_{\ast}Y\ar[r]^-{i_{\ast}g}&i_{\ast}Z\ar@{-->}[r]&.}\end{equation}
Applying the functors $\Hom_{\mathcal{A}}(i^{\ast}P,-)$ and $\Hom_{\mathcal{B}}(P,-)$ to (\ref{3}) and (\ref{4}), respectively, we obtain the following commutative diagram
$$\xymatrix{
  \Hom(i^{\ast}P,Y)\ar[d]_{\eta_{P,Y}} \ar[r] & \Hom(i^{\ast}P,Z)  \ar[d]_{\eta_{P,Z}}\ar[r] &  \mathbb{E}_{\mathcal{A}}(i^{\ast}P,X)\ar@{-->}[d]_{l} & \\
 \Hom(P,i_{\ast}Y) \ar[r] & \Hom(P,i_{\ast}Z)\ar[r] &  \mathbb{E}_{\mathcal{B}}(P,i_{\ast}X). &
 }$$
Since $P$ is projective in $\mathcal{B}$ and then $\mathbb{E}_{\mathcal{B}}(P,i_{\ast}X)=0$, there exists a morphism $t:P\rightarrow i_{\ast}Y$ such that $i_{\ast}g(t)=\eta_{P,Z}(h)$. It follows that
$$g(\eta_{P,Y}^{-1}(t))=\eta_{P,Z}^{-1}i_{\ast}g(t)=h.$$
Hence, $i^{\ast}P\in \mathcal{P}(\mathcal{A})$. It is proved dually that $i^{!}$ preserves injective objects. The proofs of $(3')$, $(4)$ and $(4')$ are similar.

(5) For any $X\in \mathcal{A}$, there exists a deflation $f:P\rightarrow i_{\ast}X$ with $P\in \mathcal{P}(\mathcal{B})$. Applying the functor $i^{\ast}$, we obtain a deflation $i^{\ast}P\rightarrow X$ with $i^{\ast}P\in \mathcal{P}(\mathcal{A})$. Hence $\mathcal{A}$ has enough projectives. Since $i^\ast$ preserves projectives, we have that $\add(i^{\ast}(\mathcal{P}(\mathcal{B})))\subseteq\mathcal{P}(\mathcal{A})$. Conversely, for $Q\in P(\mathcal{A})$, as above, there exists a deflation $i^{\ast}P\rightarrow Q$ with $P\in \mathcal{P}(\mathcal{B})$, which implies that $Q$ is a direct summand of $i^{\ast}P$. That is, $P(\mathcal{A})\subseteq \add (i^{\ast}(\mathcal{P}(\mathcal{B})))$. Similarly, the second statement in $(5)$ can be proved.

$(6)$ Since $j_\ast$ (resp. $j_{!}$) is exact, by (4) (resp. $(4')$), we obtain that $j^\ast$ preserves projectives (resp. injectives). Then, using the similar proof of $(5)$, we can prove $(6)$.

%$(7)$ Using the left exact property of $j_\ast$ together with $(1)$, we can prove the first isomorphism. The second can be proved similarly.

$(7)$ By $(5)$ we obtain that $\mathcal{A}$ has enough projectives. Since $i^!$ is  exact, we get that $i_\ast$ preserves projectives. Then according to Lemma \ref{Iso}(1), we prove $(7)$.

$(7')$ By using Lemma \ref{Iso}(1) immediately, we obtain the proof.

$(8)$ Let $X\stackrel{f}{\longrightarrow}Y\stackrel{g}{\longrightarrow}Z\stackrel{}\dashrightarrow$ be an  $\mathbb{E}_{\C}$-triangle. Since $j_{!}$ is right exact, there is an $\mathbb{E}_{\B}$-triangle $X'\stackrel{h_{2}}{\longrightarrow}j_{!}Y\stackrel{j_{!}g}{\longrightarrow}j_{!}Z\stackrel{}\dashrightarrow$ and  a compatible morphism $h_{1}$ such that $(h_{1},h_{2})\in\Phi_{ j_{!}f}$. Noting that $j^{\ast}j_{!}X\stackrel{j^{\ast}j_{!}f}{\longrightarrow}j^{\ast}j_{!}Y\stackrel{j^{\ast}j_{!}g}{\longrightarrow}j^{\ast}j_{!}Z\stackrel{}\dashrightarrow$ is an $\mathbb{E}_{\C}$-triangle since $j^{\ast}j_{!}\cong \Id_{\C}$, and $j^\ast j_!f=(j^\ast h_2)(j^\ast h_1$), we obtain that $j^*h_1$ is an inflation and then $j^{\ast}h_{1}$ is an isomorphism since $j^{\ast}h_{1}$ is a deflation and compatible. So $j^\ast X'\cong j^*j_!X$. Set $M=\cocone (h_{1})$, by Lemma \ref{2.5}(2), we have that $j^{\ast}M=0$. By (R2), there is an object $N\in \A$ such that $i_{\ast}N=M$.  Since $i^{\ast}$ is exact, $i^{\ast}X'\stackrel{i^{\ast}h_{2}}{\longrightarrow}i^{\ast}j_{!}Y\stackrel{i^{\ast}j_{!}g}{\longrightarrow}i^{\ast}j_{!}Z\stackrel{}\dashrightarrow$ is an $\mathbb{E}_{\A}$-triangle, which implies $i^{\ast}X'=0$ by $i^{\ast}j_{!}=0$. Similarly, since $i^{\ast}M\stackrel{}{\longrightarrow}i^*j_{!}X\stackrel{i^*h_1}{\longrightarrow}i^*X'\stackrel{}\dashrightarrow$ is an  $\mathbb{E}_{\A}$-triangle, we get that $i^{\ast}M=0$. Then $M=i_{\ast}N\cong i_{\ast}(i^{\ast}i_{\ast}N)\cong i_{\ast}i^{\ast}M=0$. Hence, $h_{1}$ is an isomorphism and $j_{!}X\stackrel{j_{!}f}{\longrightarrow}j_{!}Y\stackrel{j_{!}g}{\longrightarrow}j_{!}Z\stackrel{}\dashrightarrow$ is an  $\mathbb{E}_{\B}$-triangle. For the natural transformation, it is implied from the assumption that $j_!$ is right exact.
The proof of $(8')$ is similar.
\end{proof}

\begin{proposition}\label{triangulated}  Let ($\mathcal{A}$, $\mathcal{B}$, $\mathcal{C}$) be a recollement of extriangulated categories as (\ref{recolle}).

$(1)$ If $i^{!}$ is exact, for each $X\in\mathcal{B}$, there is an $\mathbb{E}_{\B}$-triangle
  \begin{equation*}\label{third}
  \xymatrix{i_\ast i^! X\ar[r]^-{\theta_X}&X\ar[r]^-{\vartheta_X}&j_\ast j^\ast X\ar@{-->}[r]&}
   \end{equation*}
 where $\theta_X$ and  $\vartheta_X$ are given by the adjunction morphisms.

$(2)$ If $i^{\ast}$ is exact, for each $X\in\mathcal{B}$, there is an $\mathbb{E}_{\B}$-triangle
  \begin{equation*}\label{four}
  \xymatrix{ j_! j^\ast X\ar[r]^-{\upsilon_X}&X\ar[r]^-{\nu_X}&i_\ast i^\ast X \ar@{-->}[r]&}
   \end{equation*}
where $\upsilon_X$ and $\nu_X$ are given by the adjunction morphisms.
\end{proposition}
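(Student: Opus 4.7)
The strategy is to exploit the exactness of $i^!$ (resp.\ $i^\ast$) to collapse the 4-term $\mathbb{E}_{\B}$-triangle sequence supplied by (R4) (resp.\ (R5)) into a genuine $\mathbb{E}_{\B}$-triangle. I treat (1) in detail; (2) will be entirely dual, using (R5), the adjunction $i^\ast\dashv i_\ast$, the vanishing $i^\ast j_!=0$, and the fact that $\nu_X$ plays the role of the unit.

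For (1), by (R4) and the definition of a left exact 4-term $\mathbb{E}_{\B}$-triangle sequence, I split the given sequence into $\mathbb{E}_{\B}$-triangles
$$i_\ast i^! X\stackrel{\theta_X}{\longrightarrow} X\stackrel{g_1}{\longrightarrow} K\dashrightarrow \quad\text{and}\quad K\stackrel{g_2}{\longrightarrow}j_\ast j^\ast X\stackrel{h}{\longrightarrow}i_\ast A\dashrightarrow$$
with $\vartheta_X=g_2g_1$ and $g_2$ compatible. I first apply the exact functor $i^!$ to the left triangle: its left-hand morphism is $i^!(\theta_X)$, where $\theta_X$ is the counit of $i_\ast\dashv i^!$. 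Combining Lemma~\ref{adjoint}(2) with the fact that the unit $\Id_{\mathcal{A}}\Rightarrow i^! i_\ast$ is a natural isomorphism (Lemma~\ref{CY}(1)) forces $i^!(\theta_X)$ to be an isomorphism, so Lemma~\ref{2.5}(2) yields $i^! K\cong 0$. Next I apply $i^!$ to the right triangle; using $i^! j_\ast=0$, $i^! i_\ast\cong\Id_{\mathcal{A}}$ (Lemma~\ref{CY}(1)(2)), and $i^! K=0$, this produces an $\mathbb{E}_{\mathcal{A}}$-triangle $0\longrightarrow 0\longrightarrow A\dashrightarrow$, whence $A\cong 0$ by a second application of Lemma~\ref{2.5}(2).

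With $A=0$, the second $\mathbb{E}_{\B}$-triangle reduces to $K\stackrel{g_2}{\longrightarrow}j_\ast j^\ast X\longrightarrow 0\dashrightarrow$, so Lemma~\ref{2.5}(2) forces $g_2$ to be an isomorphism. The sequence $i_\ast i^! X\stackrel{\theta_X}{\longrightarrow}X\stackrel{\vartheta_X}{\longrightarrow} j_\ast j^\ast X$ is then isomorphic, in the sense introduced just before Lemma~\ref{2.5}, to the first $\mathbb{E}_{\B}$-triangle above via the triple $(\Id,\Id,g_2)$, using the relation $\vartheta_X=g_2g_1$. Lemma~\ref{2.5}(1) therefore ensures this sequence is a conflation and hence underlies an $\mathbb{E}_{\B}$-triangle $i_\ast i^! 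X\longrightarrow X\longrightarrow j_\ast j^\ast X\dashrightarrow$, as desired.

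The step I expect to require the most care is the verification that $i^!(\theta_X)$ is an isomorphism: this hinges on correctly identifying $\theta_X$ with the counit of $i_\ast\dashv i^!$ and invoking the triangle identity together with invertibility of the corresponding unit. Everything else is a direct application of exactness of $i^!$, the vanishing results of Lemma~\ref{CY}, and Lemma~\ref{2.5}. The argument for (2) is completely parallel: apply $i^\ast$ first to the $\mathbb{E}_{\B}$-triangle $K'\longrightarrow X\stackrel{\nu_X}{\longrightarrow} i_\ast i^\ast X\dashrightarrow$ obtained by splitting (R5), use the triangle identity for $i^\ast\dashv i_\ast$ to deduce $i^\ast K'=0$, then apply $i^\ast$ to $i_\ast A'\longrightarrow j_!j^\ast X\longrightarrow K'\dashrightarrow$, using $i^\ast j_!=0$ and $i^\ast i_\ast\cong\Id_{\mathcal{A}}$, to force $A'=0$, and finally transport across the resulting isomorphism $j_!j^\ast X\cong K'$ via Lemma~\ref{2.5}(1).
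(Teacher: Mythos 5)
Your proof is correct and follows essentially the same route as the paper: split the (R4) (resp.\ (R5)) four-term sequence into two $\mathbb{E}_{\B}$-triangles, apply the exact functor $i^!$ (resp.\ $i^\ast$) together with $i^!j_\ast=0$ (resp.\ $i^\ast j_!=0$) and $i^!i_\ast\cong\Id$ to force $A\cong 0$, and conclude via Lemma \ref{2.5}. The only (harmless) deviation is in deducing $i^!K\cong 0$: you use the triangle identity to see that $i^!(\theta_X)$ is an isomorphism and invoke Lemma \ref{2.5}(2) on the first triangle, whereas the paper instead notes that the induced compatible inflation $i^!h_2\colon i^!M\to i^!j_\ast j^\ast X=0$ must be an isomorphism; both arguments are valid.
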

\begin{proof} We only prove (1) since the proof of (2) is similar. By (R4), for each $X\in\mathcal{B}$, there is a left exact $\mathbb{E}_{\B}$-triangle sequence
  \begin{equation*}\label{five}
  \xymatrix{i_\ast i^! X\ar[r]^-{\theta_X}&X\ar[r]^-{\vartheta_X}&j_\ast j^\ast X\ar[r]^{h}&i_\ast A}
   \end{equation*}
such that $i_\ast i^! X\stackrel{\theta_X}{\longrightarrow}X\stackrel{h_{1}}{\longrightarrow}M\stackrel{}\dashrightarrow$ and $M\stackrel{h_{2}}{\longrightarrow}j_\ast j^\ast X\stackrel{h}{\longrightarrow}i_\ast A\stackrel{}\dashrightarrow$ are $\mathbb{E}_{\B}$-triangles, $h_{2}$ is compatible and $\vartheta_X=h_{2}h_{1}$. Since $i^{!}$ is exact, we obtain that $i^{!}i_\ast i^! X\stackrel{i^{!}\theta_X}{\longrightarrow}i^{!}X\stackrel{i^{!}\vartheta_X}{\longrightarrow}i^{!}j_\ast j^\ast X$ is left exact. By Lemma \ref{CY}(2), $i^{!}j_{\ast}j^{\ast}X=0$, so $i^{!}h_{2}$ is an isomorphism. Thus, $i^!M=0$. It follows that $A\cong i^! i_\ast A=0$. By Lemma \ref{2.5}(2), $h_{2}$ is an isomorphism and thus $i_\ast i^! X\stackrel{\theta_X}{\longrightarrow}X\stackrel{\vartheta_X}{\longrightarrow}j_\ast j^\ast X\stackrel{}\dashrightarrow$ is an $\mathbb{E}_{\B}$-triangle. This finishes the proof.
\end{proof}

In what follows, let us give an example of recollement of an extriangulated category which is neither abelian nor triangulated.
\begin{example}\label{fang}
Let $A$ be the path algebra of the quiver $1\stackrel{\alpha}\longrightarrow2$ over a field. The Auslander-Reiten quiver of $\mod A$ is as follows
\begin{equation*}
\xymatrix@!=1.0pc{    &  P_1\ar[dr]^{\psi}    &   \\
S_2\ar[ur]^{\varphi}  &      & S_1 .}
\end{equation*}
Then the triangular matrix algebra $B=\begin{pmatrix} A &A\\0  &A\end{pmatrix}$ is given by the quiver
$$\xymatrix{
   &   \cdot\ar[dr]^{\beta} &  &  \\
   \cdot\ar[ur]^{\alpha}\ar[dr]_{\delta} &    &  \cdot& \\
    &  \cdot \ar[ur]_{\gamma} &  & \\  }
$$
with the relation $\beta\alpha=\gamma\delta$. It is well-known that $\mod B$ can be identified with the morphism category of $A$-modules. That is, each $B$-module can be written as a triple ${X\choose Y}_f$ such that $f:Y\rightarrow X$ is a homomorphism of $A$-modules. In the following, we write ${X\choose Y}$ instead of ${X\choose Y}_0$. The Auslander-Reiten quiver of $B$ is given by
\begin{equation*}
\xymatrix@!=3.0pc{    & P_1\choose0\ar[dr] &   & 0\choose S_2 \ar[dr] &      & {S_1\choose S_1}_{1}\ar[dr]   &   \\
S_2\choose 0 \ar[dr]\ar[ur]&    & {P_1\choose S_2}_{\varphi}\ar[dr]\ar[ur]\ar[r]  & {P_1\choose P_1}_{1}\ar[r]  &  {S_1\choose P_1}_{\psi}\ar[ur]\ar[dr]    &    & 0\choose S_1  \\
 &  {S_2\choose S_2}_{1}\ar[ur]  &&  S_1\choose 0 \ar[ur]      &   &0\choose P_1\ar[ur] }
\end{equation*}
By \cite[Example 2.12]{Ps}, we have a recollement of abelian categories
\begin{equation}\label{exam}
  \xymatrix{\mod A\ar[rr]|{i_{*}}&&\ar@/_1pc/[ll]|{i^{*}}\ar@/^1pc/[ll]|{i^{!}}\mod B
\ar[rr]|{j^{\ast}}&&\ar@/_1pc/[ll]|{j_{!}}\ar@/^1pc/[ll]|{j_{\ast}} \mod A}
\end{equation}
such that $i^{*}({X\choose Y}_{f})=\rm Coker$$(f)$, $i_{*}(X)={X\choose 0}$, $i^{!}({X\choose Y}_{f})=X$, $j_{!}(Y)={Y\choose Y}_{1}$, $j^{\ast}({X\choose Y}_{f})=Y$ and  $j_{*}(Y)={0\choose Y}$.

%Note that the sequence $$0{\longrightarrow}\footnotesize\begin{pmatrix} X_{1}\\X_{2}\end{pmatrix}_{f_{1}}\stackrel{a_{1}\choose a_{2}}{\longrightarrow}\begin{pmatrix} Y_{1}\\Y_{2}\end{pmatrix}_{f}\stackrel{b_{1}\choose b_{2}}{\longrightarrow}\begin{pmatrix}Z_{1}\\Z_{2}\end{pmatrix}_{f_{2}}{\longrightarrow}0$$
%is exact in $\mod B$ if and only if
%$$0{\longrightarrow}X_{i}\stackrel{a_{i}}{\longrightarrow}Y_{i}\stackrel{b_{i}}{\longrightarrow}Z_{i}{\longrightarrow}0$$
%is exact in $\mod A$ for $i=1,2$.
Let $\mathcal{X}_{1}=\mod A$ and $\mathcal{X}_{2}=\add({S_1\oplus P_1})$.
Observe that $\mathcal{X}_{2}$ is an extriangulated category which is neither abelian nor triangulated. Indeed, $\mathcal{X}_{2}$ is an extension-closed subcategory of $\mod B$. On the one hand, $P_1$ is a non-zero injective object. This implies $\mathcal{X}_{2}$ is not triangulated. On the other hand, $\Ker \psi$ does not belong to $\mathcal{X}_{2}$. This implies $\mathcal{X}_{2}$ is not abelian.  In addition, let
$$\mathcal{X}=\add({S_2\choose0}\oplus{P_1\choose 0}\oplus{S_1\choose 0}\oplus  {P_1\choose P_1}_{1}\oplus {S_1\choose P_1}_{\psi}\oplus{S_1\choose S_1}_{1}\oplus {0\choose P_1}\oplus {0\choose S_1}),$$
and it is also an extriangulated category.

We claim that
\begin{equation}\label{lliz}
  \xymatrix{\mathcal{X}_{1}\ar[rr]|{i_{*}}&&\ar@/_1pc/[ll]|{i^{*}}\ar@/^1pc/[ll]|{i^{!}}\mathcal{X}
\ar[rr]|{j^{\ast}}&&\ar@/_1pc/[ll]|{j_{!}}\ar@/^1pc/[ll]|{j_{\ast}} \mathcal{X}_{2}}
\end{equation}
is a recollement of an extriangulated category which is neither abelian nor triangulated. In fact, one can check that $i_{*},j^{\ast}$ are exact functors, $i^{\ast}$, $j_!$ are right exact functors and $i^{!}$, $j_\ast$ are left exact functors.

(R1) For any ${X\choose X'}_{f}\in\mathcal{X}$ and $Y\in \mathcal{X}_{2}$, $$\Hom_{\mathcal{X}}(j_{!}Y, {X\choose X'}_{f})\cong \Hom_{\mod A}(Y,j^{\ast} {X\choose X'}_{f})= \Hom_{\mathcal{X}_{2}}(Y,j^{\ast} {X\choose X'}_{f})$$
and
$$\Hom_{\mathcal{X}_{2}}(j^{\ast} {X\choose X'}_{f},Y)\cong \Hom_{\mod B}( {X\choose X'}_{f},j_{\ast}Y)= \Hom_{\mathcal{X}}( {X\choose X'}_{f},j_{\ast}Y).$$
It follows that $(j_{!}, j^{\ast}, j_{\ast})$ is an adjoint triple. Similarly, so is $(i^{*}, i_{\ast}, i^{!})$.

(R2) Note that $\Im i_{\ast}=\Ker j^{\ast}=\add({P_1\choose 0}\oplus {S_2\choose0}\oplus {S_1\choose 0})$.

(R3) Since the functors $i_\ast$, $j_!$ and $j_\ast$ in (\ref{exam}) are fully faithful, it follows that $i_\ast$, $j_!$ and $j_\ast$ in (\ref{lliz}) are also fully faithful.

(R4) Note that $i^{!}$ is exact. For any ${X\choose Y}_{f}\in\mathcal{X}$, by \cite[Proposition 2.6]{Ps}, there exists an exact sequence
$$0{\longrightarrow}i_{\ast}i^{!}{ X\choose Y}_{f}\stackrel{}{\longrightarrow}{ X\choose Y}_{f}\stackrel{}{\longrightarrow} j_{\ast}j^{\ast}{ X\choose Y}_{f}{\longrightarrow}0$$
in $\mod B$, which also provides a left exact $\mathbb{E}$-triangle sequence
$${ X\choose 0}\stackrel{}{\longrightarrow}{ X\choose Y}_{f}\stackrel{}{\longrightarrow} { 0\choose Y}{\longrightarrow}0$$
in $\mathcal{X}$.

(R5) For ${X\choose Y}_{f}\in\mathcal{X}$, by \cite[Proposition 2.6]{Ps}, there exists an exact sequence
$$0{\longrightarrow}{ X'\choose 0}_{f}\stackrel{}{\longrightarrow}j_{!}j^{\ast}{ X\choose Y}_{f}\stackrel{}{\longrightarrow}{ X\choose Y}_{f}\stackrel{}{\longrightarrow}  i_{\ast}i^{\ast}{ X\choose Y}_{f}{\longrightarrow}0$$
in $\mod B$ with $X'\in \mod A$, which provides a right exact $\mathbb{E}$-triangle sequence
$${ X'\choose 0}\stackrel{}{\longrightarrow}\begin{pmatrix} Y\\Y\end{pmatrix}_{1}\stackrel{}{\longrightarrow}\begin{pmatrix} X\\Y\end{pmatrix}_{f}\stackrel{}{\longrightarrow}  { \Coker f\choose 0}$$
in $\mathcal{X}$.
\end{example}

%We only need to prove the sufficiency.
%Applying $i_{\ast}$ to (\ref{3}), then there exists a morphism $t:P\rightarrow i_{\ast}Y$ such that $(i_{\ast}g)t=\eta_{P,Z}(i_{\ast}h)$ and $i_{\ast}h=\eta_{P,Z}^{-1}((i_{\ast}g)(\eta_{P,Y}t'))$ with $t':i_{\ast}i^{\ast}P\rightarrow i_{\ast}Y$. Observe that $i^{\ast}\eta_{P,Z}^{-1}(i_{\ast}g)=$

%It follows that
%\begin{align*}
%h\cong i^{\ast}i_{\ast}h=i^{\ast}\eta_{P,Z}^{-1}(i_{\ast}g)\cdot i^{\ast}\eta_{P,Z}^{-1}(\eta_{P,Y}t')
%\end{align*}

\section{Glued cotorsion pairs}

First of all, let us recall the definition of cotorsion pairs in an extriangulated category.
\begin{definition}\cite[Definition 4.1]{Na}\label{Cotorsion} Let $\mathscr{C}$ be an extriangulated category and $\mathcal{T}$, $\mathcal{F}\subseteq\mathscr{C}$ be a pair of subcategories of $\mathscr{C}$.
The pair $(\mathcal{T},\mathcal{F})$ is called a {\em cotorsion pair} in $\mathscr{C}$ if it satisfies the following conditions:

$(a)$ $\mathbb{E}(\mathcal{T},\mathcal{F})=0.$

$(b)$ For any $C\in\mathscr{C}$, there exists a conflation $F\longrightarrow T\longrightarrow C$ such that $F\in\mathcal{F}$, $T\in\mathcal{T}$.

$(c)$ For any $C\in\mathscr{C}$, there exists a conflation $C\longrightarrow F'\longrightarrow T'$ such that $F'\in\mathcal{F}$, $T'\in\mathcal{T}$.

\end{definition}

\begin{remark}\label{remark}
Let $(\mathcal{U}$, $\mathcal{V})$ be a cotorsion pair in an extriangulated category $\mathscr{C}$. Then
\begin{itemize}
\item $M\in\mathcal{U}$ if and only if $\mathbb{E}(M,\mathcal{V})=0$;
\item $N\in\mathcal{V}$ if and only if $\mathbb{E}(\mathcal{U},N)=0$;
\item $\mathcal{U}$ and $\mathcal{V}$ are extension-closed;
\item $\mathcal{U}$ is contravariantly finite and $\mathcal{V}$ is covariantly finite in $\mathscr{C}$;
\item $\mathcal{P}\subseteq\mathcal{U}$ and $\mathcal{I}\subseteq\mathcal{V}$.
\end{itemize}
\end{remark}

\begin{definition}\label{glued}Let ($\mathcal{A}$, $\mathcal{B}$, $\mathcal{C}$) be a recollement of extriangulated categories as (\ref{recolle}). Given cotorsion pairs $(\mathcal{T}_{1},\mathcal{F}_{1})$ and $(\mathcal{T}_{2},\mathcal{F}_{2})$ in $\mathcal{A}$ and $\mathcal{C}$, respectively, set
\begin{equation*} \mathcal{T}=\{B\in \mathcal{B}~|~i^{\ast }B\in\mathcal{T}_{1}~\text{and}~j^{\ast}B\in \mathcal{T}_{2}  \}
\end{equation*}and
\begin{equation*} \mathcal{F}=\{B\in \mathcal{B}~|~i^{!}B\in\mathcal{F}_{1}~\text{and}~j^{\ast}B\in \mathcal{F}_{2}  \}.
\end{equation*}
In this case, we call $(\mathcal{T},\mathcal{F})$ the glued pair with respect to $(\mathcal{T}_{1},\mathcal{F}_{1})$ and $(\mathcal{T}_{2},\mathcal{F}_{2})$.
\end{definition}

For a subcategory $\mathcal{X}$ of $\mathscr{C}$, we define full subcategories
$$\mathcal{X}^{\perp_{1}}=\{M\in \mathscr{C}|~\mathbb{E}(\mathcal{X},M)=0 \}.$$
Recall that an extriangulated category $\mathscr{C}$ is called {\em Frobenius} if $\mathscr{C}$ has enough projectives and enough injectives and moreover the projectives coincide with the injectives. Note that each triangulated category is a Frobenius extriangulated category.

%Recall that an extriangulated category $\mathscr{C}$ is called {\em 2-Calabi-Yau} \cite{Chang} if there exists a bifunctorial isomorphism $\mathbb{E}(A,B)=\D\mathbb{E}(B,A)$ for any $A,B\in\mathscr{C}$, where $\D=\Hom_{k}(-,k)$ is the usual $k$-duality. Any exact stably 2-Calabi-Yau category and 2-Calabi-Yau triangulated category are 2-Calabi-Yau extriangulated category. Let us state the  main result in this paper as the following
Now, we can give our main result of this paper as the following
\begin{theorem}\label{main}Let $(\mathcal{A},\mathcal{B},\mathcal{C})$ be a recollement of extriangulated categories as (\ref{recolle}), and $(\mathcal{T}_{1},\mathcal{F}_{1})$ and $(\mathcal{T}_{2},\mathcal{F}_{2})$ be two cotorsion pairs in $\mathcal{A}$ and $\mathcal{C}$, respectively. Let $(\mathcal{T},\mathcal{F})$ be the glued pair with respect to $(\mathcal{T}_{1},\mathcal{F}_{1})$ and $(\mathcal{T}_{2},\mathcal{F}_{2})$. Assume that $\mathcal{B}$ has enough projectives and $i^{!}$, $j_{!}$ are exact.

$(1)$ If one of the following conditions holds:
\begin{itemize}
  \item [(i)] $\mathbb{E}_\mathcal{B}(\mathcal{T},\mathcal{F})=0$.
  \item [(ii)] $i^{\ast}$ is  exact.
  \item [(iii)] For any morphism $f:i_{\ast}A\rightarrow j_{!}T$ with $A\in \mathcal{A}$ and $T\in \mathcal{T}_{2}$, the induced map $f^\ast:~\B(j_{!}T,F)\rightarrow \B(i_{\ast}A,F)$ is surjective for any $F\in \mathcal{F}$.
  \item [(iv)]$\mathcal{T}\subseteq j_{!}\mathcal{T}_{2}$ or $i_{\ast}\mathcal{F}_{1}\subseteq \mathcal{T}^{\perp_{1}}$.
    \item [(v)] $\mathcal{A}$ and $\mathcal{B}$ are Frobenius extriangulated categories.
\end{itemize}
Then $(\mathcal{T},\mathcal{F})$ is a cotorsion pair in $\mathcal{B}$. %In particular, $(\mathcal{T},\mathcal{F})$ is a cotorsion pair in $\mathcal{B}$ if and only if $\mathbb{E}_\mathcal{B}(\mathcal{T},\mathcal{F})=0$.

$(2)$ If $(\mathcal{U},\mathcal{V})$ is a cotorsion pair in $\mathcal{B}$ such that $i_{\ast}i^{!}\mathcal{U}\subseteq \mathcal{U}$ and $i_{\ast}i^{\ast}\mathcal{U}\subseteq\mathcal{U}$, then $(i^{\ast}\mathcal{U},i^{!}\mathcal{V})$ is a cotorsion pair in $\mathcal{A}$.

$(3)$ If $(\mathcal{U},\mathcal{V})$ is a cotorsion pair in $\mathcal{B}$ such that $j_{\ast}j^{\ast}\mathcal{V}\subseteq\mathcal{V}$ or $j_{!}j^{\ast}\mathcal{U}\subseteq\mathcal{U}$, then $(j^{\ast}\mathcal{U},j^{\ast}\mathcal{V})$ is a cotorsion pair in $\mathcal{C}$.
%In addition, $(j^{\ast}\mathcal{U},j^{\ast}\mathcal{V})$ is a cotorsion pair in $\mathcal{C}$ if and only if $\mathbb{E}_\mathcal{C}(j^{\ast}\mathcal{U},j^{\ast}\mathcal{V})=0.$
\end{theorem}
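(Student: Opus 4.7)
The plan is to verify, in each of the three parts, the three axioms of a cotorsion pair from Definition \ref{Cotorsion}. The main technical engines are the adjunction isomorphisms $\mathbb{E}_{\mathcal{B}}(i_{\ast}X,Y)\cong\mathbb{E}_{\mathcal{A}}(X,i^{!}Y)$ and $\mathbb{E}_{\mathcal{B}}(j_{!}Z,Y)\cong\mathbb{E}_{\mathcal{C}}(Z,j^{\ast}Y)$ furnished by Lemma \ref{CY}(7),(7'); these are available because $\mathcal{B}$ has enough projectives and $i^{!},j_{!}$ are exact, which also forces $j_{\ast}$ to be exact by Lemma \ref{CY}(8') and lets $\mathcal{A},\mathcal{C}$ inherit enough projectives via Lemma \ref{CY}(5),(6). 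The decomposition triangle $i_{\ast}i^{!}B\to B\to j_{\ast}j^{\ast}B\dashrightarrow$ from Proposition \ref{triangulated}(1) and the right exact sequence in (R5) will slice any $B\in\mathcal{B}$ into an $i$-part and a $j$-part.

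For part (2), orthogonality reduces to $\mathbb{E}_{\mathcal{A}}(i^{\ast}U,i^{!}V)\cong\mathbb{E}_{\mathcal{B}}(i_{\ast}i^{\ast}U,V)=0$ via the adjunction and the hypothesis $i_{\ast}i^{\ast}\mathcal{U}\subseteq\mathcal{U}$. To produce an approximation of $A\in\mathcal{A}$ I would take a cotorsion conflation $V\to U\to i_{\ast}A\dashrightarrow$ in $\mathcal{B}$, push it through the exact functor $i^{!}$ to obtain $i^{!}V\to i^{!}U\to A\dashrightarrow$, and observe that $i^{!}U\cong i^{\ast}(i_{\ast}i^{!}U)\in i^{\ast}\mathcal{U}$ using $i_{\ast}i^{!}\mathcal{U}\subseteq\mathcal{U}$; the coapproximation is dual. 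For part (3), under $j_{!}j^{\ast}\mathcal{U}\subseteq\mathcal{U}$ orthogonality follows directly from the $j_{!}$-adjunction, while under $j_{\ast}j^{\ast}\mathcal{V}\subseteq\mathcal{V}$ I would invoke Lemma \ref{Iso}(1) for $(j^{\ast},j_{\ast})$ (here $j^{\ast}$ is exact and preserves projectives by Lemma \ref{CY}(4), since $j_{\ast}$ is exact) to conclude $\mathbb{E}_{\mathcal{C}}(j^{\ast}U,j^{\ast}V)\cong\mathbb{E}_{\mathcal{B}}(U,j_{\ast}j^{\ast}V)=0$. Approximations in $\mathcal{C}$ are the images under the exact functor $j^{\ast}$ of cotorsion approximations of $j_{!}C$ (or $j_{\ast}C$) in $\mathcal{B}$, using the natural isomorphism $j^{\ast}j_{!}\cong\Id_{\mathcal{C}}$ of Lemma \ref{CY}(1).

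Part (1) carries the bulk of the work. For the orthogonality $\mathbb{E}_{\mathcal{B}}(\mathcal{T},\mathcal{F})=0$ the five hypotheses are handled separately: (i) is direct; (ii) applies Proposition \ref{triangulated}(2) to the $\mathbb{E}$-triangle $j_{!}j^{\ast}T\to T\to i_{\ast}i^{\ast}T\dashrightarrow$ and kills both outer Ext-terms through the two adjunction isomorphisms; (iii) is tailored so that in the long exact sequence coming from the (R5) decomposition of $T$, the cokernel computing $\mathbb{E}_{\mathcal{B}}(K,F)$ on the intermediate compatible term $K$ vanishes; (iv) exploits either $\mathcal{T}\subseteq j_{!}\mathcal{T}_{2}$ to apply the $j_{!}$-adjunction directly, or $i_{\ast}\mathcal{F}_{1}\subseteq\mathcal{T}^{\perp_{1}}$ combined with the triangle on $F$ from Proposition \ref{triangulated}(1), whose $i$-piece vanishes by hypothesis and whose $j$-piece vanishes via Lemma \ref{Iso}(1) applied to $(j^{\ast},j_{\ast})$; (v) uses the Frobenius identification $\mathcal{P}=\mathcal{I}$ in $\mathcal{A},\mathcal{B}$ together with the inclusions $\mathcal{P}(\mathcal{A})\subseteq\mathcal{T}_{1}\cap\mathcal{F}_{1}$ guaranteed by Remark \ref{remark} to reduce the computation to one of the previous cases via a short projective resolution. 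For the approximation axioms (1)(b),(c), the plan is to take $\mathcal{T}_{1}$--$\mathcal{F}_{1}$ and $\mathcal{T}_{2}$--$\mathcal{F}_{2}$ approximations of $i^{\ast}B$ and $j^{\ast}B$, transport them into $\mathcal{B}$ via the exact functors $i_{\ast}$ and $j_{!}$, and splice them against the (R5) decomposition of $B$ by iterated application of (ET4) and (ET4)$^{\rm op}$; membership of the glued middle object in $\mathcal{T}$ is then verified by applying $i^{\ast}$ and $j^{\ast}$ and using Lemma \ref{CY}(1),(2) together with the extension-closedness of $\mathcal{T}_{1},\mathcal{T}_{2}$.

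The principal obstacle will be the gluing step in (1)(b),(c): because the (R5) sequence is only right exact and not itself an $\mathbb{E}$-triangle (unless $i^{\ast}$ happens to be exact), the octahedral construction must proceed in two stages through the compatible intermediate object built into the definition of a right exact sequence, with the compatibility data of each factoring morphism tracked through the diagram chase and the exactness of $i_{\ast},j_{!},j^{\ast}$ used to keep the $\mathbb{E}$-triangle structure in place at every step. A secondary subtlety is that the five subcases of (1)(a) each demand a slightly different reduction of essentially the same long exact sequence; in particular case (v) requires checking that the triangle of Proposition \ref{triangulated}(1) degenerates well under the Frobenius identification, which is the only place the assumption that $\mathcal{A}$ and $\mathcal{B}$ are Frobenius is really used.
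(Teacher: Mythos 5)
Your proposal is correct and follows essentially the same route as the paper: the same adjunction isomorphisms from Lemma \ref{CY}(7),(7$'$), the same decomposition triangles from Proposition \ref{triangulated} and (R5), the same five-way case analysis for the orthogonality $\mathbb{E}_\mathcal{B}(\mathcal{T},\mathcal{F})=0$, and the same transport-and-splice construction (the paper's Lemma \ref{T}) for the approximation axioms, with parts (2) and (3) argued exactly as you describe. The only cosmetic difference is that for axiom (b) the paper transports the $\mathcal{C}$-approximation into $\mathcal{B}$ via $j_\ast$ (pulling back along the unit) rather than $j_!$, reserving $j_!$ and the counit for axiom (c); otherwise your outline --- including the two-stage octahedral passage through the compatible intermediate object of the (R5) sequence and the use of Lemma \ref{Iso}(2) in the Frobenius case (v) --- matches the published argument.
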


In what follows, we say that a commutative diagram is {\em exact} if every sub-diagram of the form $X\rightarrow Y\rightarrow Z$ is a conflation. Before proving Theorem \ref{main}, we give the following

\begin{lemma}\label{T} Keep the notation as Definition \ref{glued}.

$(1)$ If $i^{!}$ is exact, then $(\mathcal{T},\mathcal{F})$  satisfies $(b)$ in Definition \ref{Cotorsion}.

$(2)$ If $i^{!}$ and $j_{!}$ are exact, then $(\mathcal{T},\mathcal{F})$  satisfies $(c)$ in Definition \ref{Cotorsion}.
\end{lemma}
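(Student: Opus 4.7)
The plan is to establish both parts via a common blueprint: resolve the $\mathcal{A}$-component (namely $i^!B$ or $i^!B'$) and the $\mathcal{C}$-component (namely $j^*B$) of $B$ using the given cotorsion pairs, and then glue the results using the adjunction $\mathbb{E}$-triangle of Proposition \ref{triangulated}(1) and an application of axiom (ET4).

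For (2), given $B\in\mathcal{B}$, I would apply (c) of $(\mathcal{T}_2,\mathcal{F}_2)$ to $j^*B$ to obtain $j^*B\to F_2\to T_2$; since $j_!$ is exact, this lifts to the conflation $j_!j^*B\to j_!F_2\to j_!T_2$ in $\mathcal{B}$, which I would push out along the counit $\upsilon_B:j_!j^*B\to B$ to obtain $B\to B'\to j_!T_2$, with $j^*B'\cong F_2$ following from the triangle identity and Lemma \ref{CY}(1) (which make $j^*\upsilon_B$ an isomorphism). Next, applying (c) of $(\mathcal{T}_1,\mathcal{F}_1)$ to $i^!B'$ yields $i^!B'\to F_1\to T_1$; applying $i_*$ (exact) and pushing out along $\theta_{B'}:i_*i^!B'\to B'$ (an inflation by Proposition \ref{triangulated}(1), since $i^!$ is exact) produces a conflation $B'\to F\to i_*T_1$ with $j^*F\cong F_2$ and $i^!F\cong F_1$, so $F\in\mathcal{F}$. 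Composing the inflations $B\to B'$ and $B'\to F$ by (ET4) yields an object $E$ with conflations $B\to F\to E$ and $j_!T_2\to E\to i_*T_1$. Applying $j^*$ (exact) gives $j^*E\cong T_2$, while applying $i^*$ (right exact), together with $i^*j_!=0$ from Lemma \ref{CY}(2) and Lemma \ref{zero}(2), gives $i^*E\cong T_1$; hence $E\in\mathcal{T}$, and $B\to F\to E$ is the desired conflation.

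For (1), given $B$, I would apply (b) of the two cotorsion pairs to obtain conflations $F_2\to T_2\xrightarrow{\pi_2}j^*B$ and $F_1\to T_1\to i^!B$; applying $i_*$ gives the conflation $i_*F_1\to i_*T_1\to i_*i^!B$ in $\mathcal{B}$. By Proposition \ref{triangulated}(1) there is an $\mathbb{E}$-triangle $i_*i^!B\to B\to j_*j^*B\dashrightarrow\delta$; pulling $\delta$ back along $j_*(\pi_2):j_*T_2\to j_*j^*B$ produces an $\mathbb{E}$-triangle $i_*i^!B\to Y\to j_*T_2\dashrightarrow$ with a morphism $Y\to B$ and, by exactness of $j^*$ and $i^!$, $j^*Y\cong T_2\in\mathcal{T}_2$ and $i^!Y\cong i^!B$. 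The two conflations $i_*F_1\to i_*T_1\to i_*i^!B$ and $i_*i^!B\to Y\to j_*T_2$ share $i_*i^!B$ (as target of the first and source of the second); by a $3\times 3$-style gluing (a consequence of (ET4)) they produce an object $T$ together with conflations $i_*F_1\to T\to Y$ and $i_*T_1\to T\to j_*T_2$. A further analysis of the composition $T\to Y\to B$ then yields the desired conflation $F\to T\to B$; membership $T\in\mathcal{T}$ follows from $j^*T\cong T_2$ and the right-exact sequence $T_1\to i^*T\to i^*j_*T_2$ controlling $i^*T$, while $F\in\mathcal{F}$ follows from analogous computations of $j^*F$ and $i^!F$.

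The main obstacle is the gluing step in (1): combining two conflations sharing an object in the ``target--source'' configuration (distinct from the standard ``middle--source'' input to (ET4)), and then extracting from the resulting $T$ an actual deflation $T\to B$ rather than a mere morphism $T\to Y\to B$, requires careful application of extriangulated octahedral-type axioms together with the right/left-exactness structures of $j_*,j_!,i^*$. Verifying $i^*T\in\mathcal{T}_1$ also depends on the extension-closure of $\mathcal{T}_1$ (Remark \ref{remark}) and on how $i^*$ interacts with $j_*$.
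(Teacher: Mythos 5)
Your part (2) reproduces the paper's own argument almost verbatim (your $B'$ is its $H$, your $E$ is its $T$), and it is correct. Part (1), however, contains two genuine gaps, and the one you flag as "the main obstacle" is not a technical inconvenience but a real obstruction.

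The splicing step fails in general. You have conflations $i_{\ast}F_1\to i_{\ast}T_1\stackrel{p}{\to} i_{\ast}i^{!}B$ and $i_{\ast}i^{!}B\to Y\to j_{\ast}T_2$ and want an object $T$ with conflations $i_{\ast}T_1\to T\to j_{\ast}T_2$ and $i_{\ast}F_1\to T\to Y$. Producing the first of these amounts to lifting the class $\delta_2\in\mathbb{E}_{\mathcal{B}}(j_{\ast}T_2,i_{\ast}i^{!}B)$ of the second conflation through $p_{\ast}:\mathbb{E}_{\mathcal{B}}(j_{\ast}T_2,i_{\ast}T_1)\to\mathbb{E}_{\mathcal{B}}(j_{\ast}T_2,i_{\ast}i^{!}B)$. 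Proposition \ref{exact} gives exactness only up to $\mathbb{E}(-,i_{\ast}T_1)$ and says nothing about surjectivity of $p_{\ast}$; in the triangulated case the obstruction is the composite $j_{\ast}T_2\to (i_{\ast}i^{!}B)[1]\to (i_{\ast}F_1)[2]$, which has no reason to vanish. No octahedral-type axiom removes this "target--source" obstruction. The paper sidesteps it entirely: it resolves $i^{\ast}H$ (not $i^{!}B$) using condition $(b)$ of $(\mathcal{T}_1,\mathcal{F}_1)$, takes the deflation $H\to i_{\ast}i^{\ast}H$ supplied by (R5), and forms the pullback of the two deflations $i_{\ast}T_1\to i_{\ast}i^{\ast}H\leftarrow H$, which always exists by \cite[Proposition 3.15]{Na}.

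Even granting your $T$, the membership $T\in\mathcal{T}$ does not follow. Applying the right exact functor $i^{\ast}$ to $i_{\ast}T_1\to T\to j_{\ast}T_2$ yields only a right exact sequence $T_1\to i^{\ast}T\to i^{\ast}j_{\ast}T_2$. Unlike $i^{\ast}j_{!}$, the composite $i^{\ast}j_{\ast}$ is not zero (Lemma \ref{CY}(2) gives $i^{\ast}j_{!}=0$ and $i^{!}j_{\ast}=0$ only), $i^{\ast}j_{\ast}T_2$ need not lie in $\mathcal{T}_1$, and a right exact sequence is not a conflation, so extension-closure of $\mathcal{T}_1$ is not applicable. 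In the paper's construction the relevant right exact sequence is $i^{\ast}j_{!}j^{\ast}H\to i^{\ast}T\to i^{\ast}i_{\ast}T_1$ with first term zero, whence $i^{\ast}T\cong T_1$ immediately by Lemma \ref{zero}(2). The underlying point is that for condition $(b)$ one must approximate the $\mathcal{A}$-component through $i^{\ast}$ of the intermediate object and the deflation coming from (R5), not through $i^{!}B$ and the inflation coming from (R4); your outline mixes the two and cannot be completed as written.
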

\begin{proof}
(1) For any $M\in \mathcal{B}$, there exists an $\mathbb{E}_\mathcal{C}$-triangle $F_{2}\stackrel{}{\longrightarrow}\stackrel{}T_{2}{\longrightarrow}j^{\ast}M\stackrel{}\dashrightarrow$ with $F_{2}\in \mathcal{F}_{2}$ and $T_{2}\in \mathcal{T}_{2}$, since $(\mathcal{T}_{2},\mathcal{F}_{2})$ is a cotorsion pair in $\mathcal{C}$. Since $i^{!}$ is exact, by Lemma \ref{CY}$(8')$, $j_{\ast}$ is exact. Applying $j_{\ast}$ to the above $\mathbb{E}_\mathcal{C}$-triangle, we obtain an $\mathbb{E}_\mathcal{B}$-triangle $j_{\ast}F_{2}\stackrel{}{\longrightarrow}j_{\ast}T_{2}\stackrel{}{\longrightarrow}j_{\ast}j^{\ast}M\stackrel{}\dashrightarrow$. Consider the following commutative diagram
\begin{equation}\label{11}
\xymatrix{
  j_{\ast}F_{2}\ar@{=}[d] \ar[r] & H\ar[d]\ar[r] & M \ar[d]_{\eta_{M}} \ar@{-->}[r]^-{\eta_M^*\delta}  &  \\
  j_{\ast}F_{2} \ar[r]^-{} & j_{\ast}T_{2}\ar[r]^-{} & j_{\ast}j^{\ast}M \ar@{-->}[r]^-\delta&  }
\end{equation}
where $\eta_{M}$ is the unit of the adjoint pair $(j^{\ast},j_{\ast})$. Applying $j^{\ast}$ to (\ref{11}) and using Lemma \ref{adjoint}(1), we obtain that $j^{\ast}H\cong T_{2}$.  We also have an $\mathbb{E}_\mathcal{A}$-triangle $F_{1}\stackrel{}{\longrightarrow}T_{1}\stackrel{}{\longrightarrow}i^{\ast}H\stackrel{}\dashrightarrow$ with $F_{1}\in \mathcal{F}_{1}$ and $T_{1}\in \mathcal{T}_{1}$ since $(\mathcal{T}_{1},\mathcal{F}_{1})$ is a cotorsion pair in $\mathcal{A}$. Then $i_{\ast}F_{1}\stackrel{}{\longrightarrow}i_{\ast}T_{1}\stackrel{}{\longrightarrow}i_{\ast}i^{\ast}H\stackrel{}\dashrightarrow$ is an $\mathbb{E}_\mathcal{B}$-triangle, since $i_\ast$ is exact. For $H\in \mathcal{B}$, by (R5), there exists a commutative diagram
\begin{equation*}\label{SS}
\xymatrix{
  &i_{\ast}A' \ar[r]&j_{!}j^{\ast}H\ar[rr]^-{\upsilon_H}\ar[dr]_{h_{2}}&  &H\ar[r]^-{h}&i_{\ast}i^{\ast}H &\\
           &                &       &  K \ar[ur]_{h_{1}}& }
\end{equation*}
in $\mathcal{B}$ such that $i_{\ast}A'\stackrel{}{\longrightarrow}j_{!}j^{\ast}H\stackrel{h_{2}}{\longrightarrow}K\stackrel{}\dashrightarrow$ and $K\stackrel{h_{1}}{\longrightarrow}H\stackrel{h}{\longrightarrow}i_{\ast}i^{\ast}H\stackrel{}\dashrightarrow$ are $\mathbb{E}_\mathcal{B}$-triangles and $h_{2}$ is compatible, moreover, $j_{!}j^{\ast}H\stackrel{v_H}{\longrightarrow}H\stackrel{h}{\longrightarrow}i_{\ast}i^{\ast}H$ is right exact.
By \cite[Proposition 3.15]{Na}, we have the following exact commutative diagram
\begin{equation}\label{4.10}
\xymatrix{   &   & K \ar[d]_{t}\ar@{=}[r] &  K\ar[d]^{h_{1}} &\\
  & i_{\ast}F_{1} \ar[r]\ar@{=}[d]  &  T \ar[d]_{l} \ar[r]&  H \ar[d]^-h \\                                               & i_{\ast}F_{1} \ar[r]  &  i_{\ast}T_{1}  \ar[r]&  i_{\ast}i^{\ast}H . & }
\end{equation}
Consider the following commutative diagram
\begin{equation}\label{K}
\xymatrix{
  j_{!}j^{\ast}H\ar[rr]\ar[dr]_-{h_{2}}&  &T\ar[r]^-{l}&i_{\ast}T_{1} &\\
           &           K \ar[ur]_{t}      &       & & }
\end{equation}
where $h_{2}$ is a deflation and compatible, and $t$ is an inflation. Thus, the first row of (\ref{K}) is right exact. Applying the right exact functor $i^{\ast}$ to (\ref{K}) and using Lemma \ref{zero}(2), we obtain that $i^{\ast}T\cong i^{\ast}i_{\ast} T_{1}\cong T_{1}$. Applying $j^{\ast}$ to the $\mathbb{E}_\mathcal{B}$-triangle in the second row of (\ref{4.10}), we have that  $j^{\ast}T\cong j^{\ast}H\cong T_{2}$. Hence, $T\in\mathcal{T}$. Applying $\rm (ET4)^{op}$ yields an exact commutative diagram
\begin{equation}\label{7}
\xymatrix{
  i_{\ast}F_{1} \ar@{=}[d] \ar[r] & F  \ar[d] \ar[r] & j_{\ast}F_{2}  \ar[d] \\
  i_{\ast}F_{1} \ar[r] & T \ar[d] \ar[r] &  H \ar[d] \\
  &  M  \ar@{=}[r] & M. }
\end{equation}
Applying $j^*$ to the $\mathbb{E}_\mathcal{B}$-triangle in the first row of (\ref{7}), we obtain that $j^{\ast}F\cong j^{\ast}j_{\ast}F_{2}\cong F_{2}$. Similarly, applying $i^!$ to the $\mathbb{E}_\mathcal{B}$-triangle in the first row of (\ref{7}), by the dual of Lemma \ref{zero}, we have that $i^{!}F\cong i^{!}i_{\ast}F_{1}\cong F_{1}$. That is, $F\in\mathcal{F}$. So the second column in (\ref{7}) gives a desired $\mathbb{E}_\mathcal{B}$-triangle. Therefore, $(\mathcal{T},\mathcal{F})$ satisfies $(b)$ in Definition \ref{Cotorsion}.

(2) For any $M\in \mathcal{B}$, there exists an $\mathbb{E}_\mathcal{C}$-triangle $j^{\ast}M\stackrel{}{\longrightarrow}F_{2}\stackrel{}{\longrightarrow}\stackrel{}T_{2}\dashrightarrow$ with $F_{2}\in \mathcal{F}_{2}$ and $T_{2}\in \mathcal{T}_{2}$, since $(\mathcal{T}_{2},\mathcal{F}_{2})$ is a cotorsion pair in $\mathcal{C}$. Since $j_{!}$ is exact, we obtain an $\mathbb{E}_\mathcal{B}$-triangle $j_{!}j^{\ast}M\stackrel{}{\longrightarrow}j_{!}F_{2}\stackrel{}{\longrightarrow} j_{!}T_{2}\dashrightarrow$.
Consider the following exact commutative diagram
\begin{equation}\label{20}
\xymatrix{
 j_{!}j^{\ast}M \ar[d]_{\epsilon_{M}} \ar[r] & j_{!}F_{2}\ar[d]\ar[r] & j_{!}T_{2} \ar@{=}[d]\ar@{-->}[r]^-{\delta}  &  \\
 M \ar[r]&  H  \ar[r]  &      j_{!}T_{2} \ar@{-->}[r]^-{(\epsilon_{M})_{\ast}\delta}&  }
\end{equation}
where $\epsilon_{M}$ is the counit of the adjoint pair $(j_{!},j^{\ast})$.  Applying $j^{\ast}$ to (\ref{20}) and using Lemma \ref{adjoint}(2), we obtain that $j^{\ast}H\cong F_{2}$. We also have an $\mathbb{E}_\mathcal{A}$-triangle $i^{!}H\stackrel{}{\longrightarrow}F_{1}\stackrel{}{\longrightarrow}T_{1}\stackrel{}\dashrightarrow$ with $F_{1}\in \mathcal{F}_{1}$ and $T_{1}\in \mathcal{T}_{1}$, since $(\mathcal{T}_{1},\mathcal{F}_{1})$ is a cotorsion pair in $\mathcal{A}$. Then $i_{\ast}i^{!}H\stackrel{}{\longrightarrow}i_{\ast}F_{1}\stackrel{}{\longrightarrow}i_{\ast}T_{1}\stackrel{}\dashrightarrow$ is an  $\mathbb{E}_\mathcal{B}$-triangle since $i_{\ast}$ is exact.
Consider the following exact commutative diagram
\begin{equation}\label{30}
\xymatrix{
 i_{\ast}i^{!}H \ar[d]_{\epsilon_{H}} \ar[r] &i_{\ast}F_{1}\ar[d]\ar[r] & i_{\ast}T_{1} \ar@{=}[d]\ar@{-->}[r]^-{\delta}  &  \\
 H \ar[r]&  F  \ar[r]  &       i_{\ast}T_{1} \ar@{-->}[r]^-{(\epsilon_{H})_{\ast}\delta}&  }
\end{equation}
where $\epsilon_{H}$ is the counit of the adjoint pair $(i_{\ast},i^{!})$. Applying $i^{!}$ to (\ref{30}) and using Lemma \ref{adjoint}(2), we obtain that $i^{!}F\cong F_{1}$.  Applying $j^{\ast}$ to the $\mathbb{E}_\mathcal{B}$-triangle in the second row of (\ref{30}) and using $j^{\ast}i_{\ast}=0$, we obtain that $j^{\ast}F\cong j^{\ast}H\cong F_{2}$. Thus, $F\in\mathcal{ F}$.
Applying $\rm (ET4)$ yields an exact commutative diagram
\begin{equation}\label{60}
\xymatrix{
  M \ar@{=}[d] \ar[r] & H  \ar[d] \ar[r] & j_{!}T_{2}  \ar[d] \\
 M \ar[r] & F \ar[d] \ar[r] &  T \ar[d] \\
  &  i_{\ast}T_{1}  \ar@{=}[r] & i_{\ast}T_{1}. }
\end{equation}
Applying $j^{\ast}$ to the $\mathbb{E}_\mathcal{B}$-triangle in the third column of (\ref{60}), we obtain that $j^{\ast}T\cong j^{\ast}j_{!}T_{2}\cong T_{2} $. Similarly, applying $i^{\ast}$ to the $\mathbb{E}_\mathcal{B}$-triangle in the third column of (\ref{60}), by Lemma \ref{zero}(2), we have that $i^{\ast}T\cong i^{\ast}i_{\ast}T_{1}\cong T_{1}$. That is, $T\in\mathcal{T}$. So the second row in (\ref{60}) gives a desired $\mathbb{E}_\mathcal{B}$-triangle. Therefore, $(\mathcal{T},\mathcal{F})$ satisfies $(c)$ in Definition \ref{Cotorsion}.
\end{proof}

 %Dually, we can prove $(\mathcal{T},\mathcal{F})$ also satisfies $(c)$ in Definition \ref{Cotorsion}

Now we are in the position to prove Theorem \ref{main}.

\textbf{{Proof of Theorem \ref{main}.}}
By Lemma \ref{CY}$(8')$, $(5)$ and $(6)$, we have that $j_{\ast}$ is exact and $\mathcal{A}$, $\mathcal{C}$ has enough projectives.

(1) Take any objects $T\in\mathcal{T}$ and $F\in\mathcal{F}$. By Lemma \ref{T}, we only need to prove $\mathbb{E}_\mathcal{B}(T,F)=0$.

(i) Immediately.

(ii) Since $i^{\ast}$ is exact, by Proposition \ref{triangulated}(2), there is an $\mathbb{E}_\mathcal{B}$-triangle $j_! j^\ast T\stackrel{}{\longrightarrow}T\stackrel{}{\longrightarrow}i_\ast i^\ast T\stackrel{}\dashrightarrow$. Applying $\Hom_\mathcal{B}(-,F)$ to this $\mathbb{E}_\mathcal{B}$-triangle, we get an exact sequence
$$\mathbb{E}_\mathcal{B}(i_{\ast}i^{\ast}T,F){\longrightarrow}\mathbb{E}_\mathcal{B}(T,F){\longrightarrow}\mathbb{E}_\mathcal{B}(j_{!}j^{\ast}T,F).$$
By Lemma \ref{CY}(7), we obtain that $\mathbb{E}_\mathcal{B}(i_{\ast}i^{\ast}T,F)\cong\mathbb{E}_\mathcal{A}(i^{\ast}T,i^{!}F)=0$, since $i^{\ast}T\in\mathcal{T}_1$ and $i^{!}F\in\mathcal{F}_1$. Similarly, $\mathbb{E}_\mathcal{B}(j_{!}j^{\ast}T,F)\cong\mathbb{E}_\mathcal{C}(j^{\ast}T,j^{\ast}F)=0$. It follows that $\mathbb{E}_\mathcal{B}(T,F)=0$. %By Lemma \ref{T}, we get that $(\mathcal{T},\mathcal{F})$ is a cotorsion pair in $\mathcal{B}$.

(iii)
%We first claim that $j^{\ast}\mathcal{T}=\mathcal{T}_{2}$. Indeed, we have  $j^{\ast}\mathcal{T}\subseteq \mathcal{T}_{2}$. Conversely, for any $T_{2}\in \mathcal{T}_{2}$, we have that $j^{\ast}j_{!}T_{2}\cong T_{2}\in \mathcal{T}_{2}$ and $i^{\ast}j_{!}T_{2}=0$. Hence, $j_{!}T_{2}\in \mathcal{T}$ and then $T_{2}\cong j^{\ast}j_{!}T_{2}\in j^{\ast}\mathcal{T}$. So, $j^{\ast}\mathcal{T}=\mathcal{T}_{2}$.
By (R5), there exists a commutative diagram
\begin{equation*}\label{SSS}
\xymatrix{
  &i_{\ast}A' \ar[r]&j_{!}j^{\ast}T\ar[rr]^-{}\ar[dr]&  &T\ar[r]&i_{\ast}i^{\ast}T\ar[r] &\\
           &                &       &  K \ar[ur]& }
\end{equation*}
in $\mathcal{B}$ such that $i_{\ast}A'\stackrel{}{\longrightarrow}j_{!}j^{\ast}T\stackrel{}{\longrightarrow}K\stackrel{}\dashrightarrow$ and $K\stackrel{}{\longrightarrow}T\stackrel{}{\longrightarrow}i_{\ast}i^{\ast}T\stackrel{}\dashrightarrow$ are $\mathbb{E}_\mathcal{B}$-triangles. Applying $\Hom(-,F)$ to the two $\mathbb{E}_\mathcal{B}$-triangles above, we have two exact sequences
$$\mathbb{E}_\mathcal{B}(i_{\ast}i^{\ast}T,F)\rightarrow\mathbb{E}_\mathcal{B}(T,F)\rightarrow\mathbb{E}_\mathcal{B}(K,F)$$
and
$$\Hom_{\B}(j_{!}j^{\ast}T,F)\stackrel{f^{\ast}}\rightarrow \Hom_{\B}(i_{\ast}A' ,F)\rightarrow\mathbb{E}_\mathcal{B}(K,F)\rightarrow\mathbb{E}_\mathcal{B}(j_{!}j^{\ast}T,F).$$
By Lemma \ref{CY}$(7')$, we obtain that $\mathbb{E}_\mathcal{B}(j_{!}j^{\ast}T,F)\cong\mathbb{E}_\mathcal{C}(j^{\ast}T,j^{\ast}F)=0$. Similarly, $\mathbb{E}_\mathcal{B}(i_{\ast}i^{\ast}T,F)\cong\mathbb{E}_\mathcal{A}(i^{\ast}T,i^{!}F)=0$. By hypothesis, $f^{\ast}$ is surjective, so we obtain that $\mathbb{E}_{\B}(K,F)=0$. It follows that $\mathbb{E}_\mathcal{B}(T,F)=0$.

(iv)
%Using the arguments analogous to the proof of $j^{\ast}\mathcal{T}=\mathcal{T}_{2}$ in (c), one can show that $i^{!}\mathcal{F}=\mathcal{F}_{1}$.
Since $i^{!}$ is exact, by Proposition \ref{triangulated}(1), there is an $\mathbb{E}_\mathcal{B}$-triangle \begin{equation*}\label{i}
i_\ast i^! F\stackrel{}{\longrightarrow}F\stackrel{}{\longrightarrow}j_\ast j^\ast F\stackrel{}\dashrightarrow.
\end{equation*}
Applying $\Hom(T,-)$ to the $\mathbb{E}_\mathcal{B}$-triangle above, we have an exact sequence
$$\mathbb{E}_\mathcal{B}(T,i_\ast i^! F){\longrightarrow}\mathbb{E}_\mathcal{B}(T,F){\longrightarrow}\mathbb{E}_\mathcal{B}(T,j_\ast j^\ast F).$$
Since $i^!$ is exact, we obtain that $j_\ast$ is exact and then $j^\ast$ preserves projectives by Lemma \ref{CY}(4). Thus, by Lemma \ref{Iso}(1), we have that $\mathbb{E}_\mathcal{B}(T,j_\ast j^\ast F)\cong\mathbb{E}_\mathcal{C}(j^\ast T,j^\ast F)=0$. If $\mathcal{T}\subseteq j_{!}\mathcal{T}_{2}$, i.e., there exists $T_2\in\mathcal{T}_{2}$ such that $T\cong j_{!}T_2$. Then $\mathbb{E}_\mathcal{B}(T,i_\ast i^! F)\cong\mathbb{E}_\mathcal{B}(j_{!}{T}_{2},i_\ast i^! F)\cong\mathbb{E}_\mathcal{C}({T}_{2},j^{\ast}i_\ast i^! F)=0$. If $i_{\ast}\mathcal{F}_{1}\subseteq \mathcal{T}^{\perp_{1}}$, then $\mathbb{E}_\mathcal{B}(T,i_{\ast}i^{!}F)=0$ since $i^!F\in\mathcal{F}_{1}$. Hence, $\mathbb{E}_\mathcal{B}(T,F)=0$.

(v) Since $\mathcal{A}$ and $\mathcal{B}$ are Frobenius extriangulated categories, projectives coincide with injectives, by Lemma \ref{CY}(4), $i_{\ast}$ preserves injectives. Then by Lemma \ref{Iso}(2), we have the isomorphism $\mathbb{E}_{\mathcal{A}}(i^\ast X,Y)\cong\mathbb{E}_{\mathcal{B}}(X,i_\ast Y)$ for any $X\in\mathcal{B}$ and $Y\in\mathcal{A}$. Using the similar proof of (iv), we prove (v).

(2) For any $X\in \mathcal{A}$, there exists an $\mathbb{E}_\mathcal{B}$-triangle $V\stackrel{}{\longrightarrow}U\stackrel{}{\longrightarrow}i_{\ast}X\stackrel{}\dashrightarrow$ with $V\in \mathcal{V}$ and $U\in \mathcal{U}$. Then we have an $\mathbb{E}_\mathcal{A}$-triangle $i^{!}V\stackrel{}{\longrightarrow}i^{!}U\stackrel{}{\longrightarrow}X\stackrel{}\dashrightarrow$, since $i^{!}$ is exact. Since $i_{\ast}i^{!}U\in i_{\ast}i^{!}\mathcal{U}\subseteq \mathcal{U}$, we obtain that $i^{!}U\in i^{\ast}\mathcal{U}$. That is, $(i^{\ast}\mathcal{U},i^{!}\mathcal{V})$ satisfies $(b)$ in Definition \ref{Cotorsion}. Dually, we can prove that  $(i^{\ast}\mathcal{U},i^{!}\mathcal{V})$ also satisfies $(c)$ in Definition \ref{Cotorsion}. For any $U\in\mathcal{U}$ and $V\in\mathcal{V}$, by Lemma \ref{CY}(7), we have that $\mathbb{E}_\mathcal{A}(i^{\ast}{U},i^{!}{V})\cong\mathbb{E}_\mathcal{B}(i_{\ast}i^{\ast}{U},{V})\cong\mathbb{E}_\mathcal{B}(U',{V})=0$ for some $U'\in\mathcal{U}$.
Hence, $(i^{\ast}\mathcal{U},i^{!}\mathcal{V})$ is a cotorsion pair in $\mathcal{A}$.

(3) The proof is analogous to (2).\fin\\

%In general, the glued pair $(\mathcal{T},\mathcal{F})$ is not a cotorsion pair in $\mathcal{B}$ (see Example \ref{example}).
Applying the main theorem to recollements of triangulated categories, we have the following
\begin{corollary}\cite[Theorem 3.1]{Chen} Let ($\mathcal{A}$, $\mathcal{B}$, $\mathcal{C}$) be a recollement of triangulated categories. Let $(\mathcal{T},\mathcal{F})$ be a  glued pair with respect to cotorsion pairs $(\mathcal{T}_{1},\mathcal{F}_{1})$ and $(\mathcal{T}_{2},\mathcal{F}_{2})$ in $\mathcal{A}$ and $\mathcal{C}$, respectively. Then $(\mathcal{T},\mathcal{F})$ is a cotorsion pair in $\mathcal{B}$.
\end{corollary}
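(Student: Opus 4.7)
The plan is to obtain this statement as a direct corollary of Theorem \ref{main}(1), by checking that the triangulated setting is a special case in which all of its hypotheses, and in fact several of its sufficient conditions, are automatically satisfied. First I would observe that by Remark 3.2(2), a recollement of triangulated categories is in particular a recollement of extriangulated categories, so Theorem \ref{main} applies once its standing assumptions are verified. Since any triangulated category is a Frobenius extriangulated category whose projective--injective class consists of the zero objects, the category $\mathcal{B}$ trivially has enough projectives, and both $\mathcal{A}$ and $\mathcal{B}$ are Frobenius extriangulated.

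Next I would check the exactness hypotheses on $i^{!}$ and $j_{!}$. In a recollement of triangulated categories, all six functors $i^{*}, i_{*}, i^{!}, j_{!}, j^{*}, j_{*}$ are triangle functors. By Remark 2.11 every triangle functor between triangulated categories is simultaneously left exact and right exact in the extriangulated sense, and hence exact by Proposition \ref{2.10}. In particular both $i^{!}$ and $j_{!}$ are exact, and the standing hypotheses of Theorem \ref{main}(1) are in place.

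Finally, I would invoke either of the sufficient conditions (ii) or (v) of Theorem \ref{main}(1). Condition (ii) is immediate since $i^{*}$ is a triangle functor, hence exact; condition (v) is also immediate since $\mathcal{A}$ and $\mathcal{B}$ are Frobenius. Either one produces the vanishing $\mathbb{E}_{\mathcal{B}}(\mathcal{T},\mathcal{F})=0$, and together with Lemma \ref{T} (which already gives the two approximation conflations) this exhibits $(\mathcal{T},\mathcal{F})$ as a cotorsion pair in $\mathcal{B}$.

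No real obstacle is expected: the main conceptual point is simply to read off that, in the triangulated case, every functor in the recollement is a triangle functor and therefore automatically exact in the extriangulated sense, so the technical conditions imposed in Theorem \ref{main}(1) become vacuous. The only place where one might want to pause is the translation between ``triangle functor'' and ``exact extriangulated functor,'' but that is precisely what Remark 2.11 and Proposition \ref{2.10} record.
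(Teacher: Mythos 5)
Your proposal is correct and follows exactly the route the paper intends: the corollary is obtained by specializing Theorem \ref{main}(1) to the triangulated case, where $\mathcal{B}$ trivially has enough projectives (the projectives being the zero objects), every functor in the recollement is a triangle functor and hence exact by Remark 2.11 and Proposition \ref{2.10}, and condition (ii) (or (v)) is automatic. Nothing is missing.
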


 We finish this section with a straightforward example illustrating Theorem \ref{main}.
\begin{example}\label{example} Keep the notation as Example \ref{fang}.

Observe that $\mathcal{P}(\mod A)=\add({P_{1}\oplus S_{2}})$ and $\mathcal{I}(\mod A)=\add({P_{1}\oplus S_{1}})$. By Remark \ref{remark}, we know that $\mod A$ has only two cotorsion pairs
$$\mathcal{H}_{1}=(\mathcal{P}(\mod A),\mod A)~~\text{and}~~\mathcal{H}_{2}=(\mod A,\mathcal{I}(\mod A)).$$

(1) Let $(\mathcal{T},\mathcal{F})$ be the glued pair with respect to $\mathcal{H}_{1}$ and $\mathcal{H}_{1}$. Then
$$\mathcal{T}=\add({S_{2}\choose0}\oplus {P_{1}\choose0}\oplus {S_{2}\choose S_{2}}_{1}\oplus {0\choose S_{2}}\oplus {P_{1}\choose P_{1}}_{1}\oplus {S_{1}\choose P_{1}}_{\psi}\oplus {0\choose P_{1}})$$
and $\mathcal{F}=\mod \mathcal{B}$.

(2) Let $(\mathcal{T},\mathcal{F})$ be the glued pair with respect to $\mathcal{H}_{1}$ and $\mathcal{H}_{2}$. Then
$$\mathcal{T}=\mod \mathcal{B}\setminus\add({P_{1}\choose S_{2}}_{\varphi}\oplus {S_{1}\choose0})$$
and
$$\mathcal{F}=\mod \mathcal{B}\setminus\add({S_{2}\choose S_{2}}_{1}\oplus {P_{1}\choose S_{2}}_{\varphi}\oplus {0\choose S_{2}}).$$

(3) Let $(\mathcal{T},\mathcal{F})$ be the glued pair with respect to $\mathcal{H}_{2}$ and $\mathcal{H}_{2}$. Then $\mathcal{T}=\mod \mathcal{B}$ and
$$\mathcal{F}=\mod \mathcal{B}\setminus\add({S_{2}\choose S_{2}}_{1}\oplus {P_{1}\choose S_{2}}_{\varphi}\oplus {0\choose S_{2}}).$$

(4) Let $(\mathcal{T},\mathcal{F})$ be the glued pair with respect to $\mathcal{H}_{2}$ and $\mathcal{H}_{1}$.
 Then
$$\mathcal{T}=\mod \mathcal{B}\setminus\add({S_{1}\choose S_{1}}_{1}\oplus {0\choose S_{1}})$$
and
$$\mathcal{F}=\mod \mathcal{B}\setminus\add({S_{2}\choose 0}\oplus {S_{2}\choose S_{2}}_{1}).$$

However, every glued pair above is not a cotorsion pair in $\mod B$ since $\rm Ext^{1}(\mathcal{T},\mathcal{F})\neq0$.

Observe that
$$\mathcal{P}(\mod B)=\add({S_{2}\choose0}\oplus {P_{1}\choose0}\oplus {S_{2}\choose S_{2}}_{1} \oplus {P_{1}\choose P_{1}}_{1} )$$
and
$$\mathcal{I}(\mod B)=\add({P_{1}\choose P_{1}}_{1}\oplus {S_{1}\choose S_{1}}_{1}\oplus {0\choose P_{1}} \oplus {0\choose S_{1}}) .$$

(5) Take $\mathcal{T}=\mathcal{P}(\mod B)$ and $\mathcal{F}=\mod B$, then $(\mathcal{T},\mathcal{F})$ is a cotorsion pair in $\mod B$. One can check that
$$i_{\ast}i^{\ast}\mathcal{T}=i_{\ast}i^{!}\mathcal{T}=\add({S_{2}\choose 0}\oplus {P_{1}\choose 0} )\subseteq\mathcal{T}$$
and $j_{\ast}j^{\ast}\mathcal{F}\subseteq \mathcal{F}.$ Therefore, by Theorem \ref{main}(2) and (3), we know that $(i^{\ast}\mathcal{T},i^{!}\mathcal{F})$ and $(j^{\ast}\mathcal{T},j^{\ast}\mathcal{F})$ are cotorsion pairs in $\mod A$.

$(5')$ Let $$\mathcal{T}=\add({S_{2}\choose0}\oplus {P_{1}\choose0} \oplus {S_{2}\choose S_{2}}_{1}\oplus {P_{1}\choose P_{1}}_{1}\oplus {0\choose S_{1}})$$
and
$$\mathcal{F}=\add({P_{1}\choose P_{1}}_{1}\oplus {S_{1}\choose S_{1}}_{1} \oplus {0\choose P_{1}}\oplus {0\choose S_{1}}\oplus {P_{1}\choose 0}\oplus {S_{2}\choose S_{2}}_{1}\oplus {S_{2}\choose 0}),$$
then $(\mathcal{T},\mathcal{F})$ is also a cotorsion pair in $\mod B$.
We have that
$$i_{\ast}i^{\ast}\mathcal{T}=i_{\ast}i^{!}\mathcal{T}=\add({S_{2}\choose 0}\oplus {P_{1}\choose 0} )\subseteq\mathcal{T}.$$
Thus, by Theorem \ref{main}(2), we obtain that $(i^{\ast}\mathcal{T},i^{!}\mathcal{F})$ is a cotorsion pairs in $\mod A$. But $(j^{\ast}\mathcal{T},j^{\ast}\mathcal{F})$ is not a cotorsion pair in $\mod A$ since $ \Ext^{1}(j^{\ast}\mathcal{T},j^{\ast}\mathcal{F})\neq0$.

\end{example}

\end{document}